\newcommand \s{^{*}}
\newcommand \+{^{\dagger}}
\begin{document}
\title{Generalized Principal Pivot Transform and its\\ Inheritance Properties%\thanks{Grants or other notes
	%about the article that should go on the front page should be
	%placed here. General acknowledgments should be placed at the end of the article.}
}
\subtitle{}

%\titlerunning{Short form of title}        % if too long for running head

\author{	K. Kamaraj        \and
	P. Sam Johnson \and Sachin Manjunath Naik %etc.
}

%\authorrunning{Short form of author list} % if too long for running head

\institute{	K. Kamaraj   \at
	Department of Mathematics, University College of Engineering Arni,\\ Anna University, Arni 632326, India. \\
	%  Tel.: +123-45-678910\\
	%  Fax: +123-45-678910\\
	\email{krajkj@yahoo.com}           %  \\
	%             \emph{Present address:} of F. Author  %  if needed
	\and
	P. Sam Johnson \at
	Department of Mathematical and Computational Sciences,\\ National Institute of Technology Karnataka (NITK), Surathkal, 		Mangaluru 575 025, India  \\
	\email{sam@nitk.edu.in} 
	\and
	Sachin Manjunath Naik \at
	Department of Mathematical and Computational Sciences,\\ National Institute of Technology Karnataka (NITK), Surathkal, 		Mangaluru 575 025, India  \\
	\email{sachinmaths46@gmail.com} 
}

\date{Received: date / Accepted: date}
% The correct dates will be entered by the editor

\maketitle

\begin{abstract}
	In this paper, some more properties of the generalized principal pivot transform are derived.
	Necessary and sufficient conditions for the equality between Moore-Penrose inverse of a generalized principal pivot transform and its complementary generalized principal pivot transform are presented. It  has been shown that the generalized principal pivot transform preserves the rank of symmetric part of a given square matrix. These results appear to be more generalized than the existing ones. Inheritance property of $P_{\dagger}$-matrix are also characterized for generalized principal pivot transform.
	\keywords{Moore-Penrose Inverse \and Generalized Principal Pivot Transform \and Range-Hermitian Matrix \and Almost Skew-Symmetric Matrix \and Inheritance Properties.}
	% \PACS{PACS code1 \and PACS code2 \and more}
	\subclass{15A09\and  15B48}
\end{abstract}

	\section{Introduction}
	Let $ M $ be an $ n\times n $ complex matrix partitioned into blocks as $\left(\begin{array}{cc}A & B \\ C & D \\ \end{array}\right)$ where $ A $ is an invertible matrix.  The principal pivot transform of $ M $ with respect to $ A $ is defined as $\widetilde{M}=\left(\begin{array}{cc}A^{-1} & -A^{-1}B \\ CA^{-1} & S \\ \end{array}\right)$, where $ S=D-CA^{-1}B $ is the Schur complement of $ A $ in $ M$.  The operation that transforms $ M\mapsto \widetilde{M} $ is called the principal pivot transform of $ M $ with respect to $A$, denoted by $ ppt(M, A)$.  Properties and applications of the principal pivot transform with an interesting history are found in the elegant papers \cite{bishtravindranKCS,tsatsomeros}.
	
 AR. Meenakshi \cite{meenakshi} was perhaps the first to study the generalized principal pivot transform in the context of finding relationship between the generalized principal pivot transform and range-Hermitian matrices.
	Rajesh Kannan and Bapat \cite{rajesh_first_paper,rajeshkannan} defined generalized principal pivot transform and discussed its properties.

	In this paper, we  derive some characterizations on generalized principal pivot transform of complex partitioned matrices of the form
	$M=\left(\begin{array}{cc}
	A & B \\ C & D \\ \end{array}\right)$.  In Section 2, we state some definitions and results which are useful in the sequel.  In Section 3, we give some necessary and sufficient conditions to express the Moore-Penrose inverse of $ M $ in terms of generalized principal pivot transform of a suitable matrix. Few necessary conditions are given in \cite{rajeshkannan} for preserving  symmetric part of the matrix by generalized principal pivot transform. We prove the results with weaker assumptions. We also prove the domain-range exchange property for a larger class of matrices using generalized principal pivot transform. In the concluding section, we discuss inheritance properties of the generalized principal pivot transform of $P_{\+}$-matrices which are relevant and useful in the context of the linear complementarity problem.

	\section{Notations, Definitions and Preliminary Results}
	Throughout this paper, we shall deal with $ \mathbb C^{m\times n} $, the space of $ m\times n $ complex matrices.  For any $A\in \mathbb C^{m\times n}$, let $A^*$, $R(A)$, $N(A)$ and $rank(A)$ denote the complex conjugate transpose, range space, null space and rank of $A$, respectively.
	The Moore-Penrose inverse of $A\in  \mathbb C^{m\times n}$, denoted by $A^\dag$ is the unique solution $ X\in \mathbb C^{n\times m}$  of the equations : $AXA=A$, $XAX=X$, $(AX)^*=AX$ and $(XA)^*=XA$. 
	
	If $X$ satisfies the first equation, then $X$ is called an $ \{1\} $-inverse of $ A $ and is denoted by $A^{(1)}$. The set of all $ \{1\} $-inverses of $ A $ is denoted by $ A\{1\}$.  In a similar way, we denote the sets by $ A\{1, 2\}$ and $ A\{1,2,3\}$.  Note that $ A\{1\} $ is non-empty.  A matrix $A\in \mathbb C^{n\times n} $  is said to be range-Hermitian if $R(A)=R(A^*)$. An easy consequence of the definition gives that $A$ is range-Hermitian if and only if $AA^\dag = A^\dag A$ \cite{benisraelbook}.
	
	\begin{definition}\cite{rajesh_first_paper}
		Let $M=\left(\begin{array}{cc}A & B \\ C & D \\ \end{array}\right)$ be a complex partitioned matrix. Then the generalized principal pivot transform of $M$ with respect to $A$ is defined by $gppt(M,A)=\left(\begin{array}{cc}A^\dag & -A^\dag B \\ CA^\dag & D-CA^\dag B \\ \end{array}\right)$. Similarly, the generalized principal pivot transform of $M$ with respect to $D$ is defined by
		$gppt(M,D)=\left(\begin{array}{cc}A-BD^\dag C &  BD^\dag \\ -D^\dag C & D^\dag  \\ \end{array}\right)$. Here $D-CA^\dag B$ and $A-BD^\dag C$ are called the generalized Schur complements of $M$ with respect to $A$ and $D$ respectively.
	\end{definition}
	
	The following theorem is well-known and quite useful in the sequel.
	\begin{theorem}\cite{benisraelbook}\label{solution}
		The system of equation $AXB=C$ is consistent if and only if $AA^{(1)} CB^{(1)} B =C$, for any $A^{(1)}\in A\{1\}$ and $B^{(1)}\in B\{1\}$. In this case, the general solution is  $$X=A^{(1)} C B^{(1)} + Z-A^{(1)} AZBB^{(1)} $$ where $Z$ is an arbitrary matrix. In particular, if $Z=0$, then $X=A^{(1)}CB^{(1)}$.
	\end{theorem}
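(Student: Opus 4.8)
The plan is to establish the two assertions separately: first the consistency criterion, then the representation of the general solution. Both rest entirely on the defining identities of a $\{1\}$-inverse, namely $AA^{(1)}A = A$ and $BB^{(1)}B = B$, so no deeper machinery is needed.

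For the consistency criterion I would argue both implications directly. If the stated identity $AA^{(1)}CB^{(1)}B = C$ holds, then $X = A^{(1)}CB^{(1)}$ is an explicit solution, since substituting it gives $AXB = AA^{(1)}CB^{(1)}B = C$; hence the system is consistent. Conversely, if some $X_0$ satisfies $AX_0B = C$, I would substitute $C = AX_0B$ into the expression $AA^{(1)}CB^{(1)}B$ and collapse the products using $AA^{(1)}A = A$ on the left and $BB^{(1)}B = B$ on the right, obtaining $AA^{(1)}(AX_0B)B^{(1)}B = AX_0B = C$. This proves the equivalence and shows it is independent of the particular choice of $\{1\}$-inverses.

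For the general solution I would verify two inclusions. First, for an arbitrary $Z$, substituting $X = A^{(1)}CB^{(1)} + Z - A^{(1)}AZBB^{(1)}$ into $AXB$ and again using $AA^{(1)}A = A$, $BB^{(1)}B = B$ makes the term $AA^{(1)}AZBB^{(1)}B$ collapse to $AZB$, which cancels the free $AZB$ term and leaves $AA^{(1)}CB^{(1)}B = C$; so every such $X$ solves the system. Conversely, given any solution $X$, I would simply set $Z = X$ in the formula: the term $A^{(1)}AXBB^{(1)} = A^{(1)}(AXB)B^{(1)} = A^{(1)}CB^{(1)}$ cancels the leading term, recovering $X$ exactly. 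This shows the parametrized family exhausts all solutions.

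The computations are routine, so there is no serious obstacle; the only point needing care is the bookkeeping of the cancellations, ensuring that each application of $AA^{(1)}A=A$ or $BB^{(1)}B=B$ acts on the intended factor. The idea worth isolating is that choosing $Z = X$ in the backward direction is precisely what guarantees the formula captures every solution rather than merely a subfamily.
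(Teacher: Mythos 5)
Your proof is correct and follows exactly the classical argument of Ben-Israel and Greville, which the paper cites for this theorem without reproducing a proof: the forward direction verifies $X = A^{(1)}CB^{(1)}$ directly, the converse substitutes $C = AX_0B$ and collapses via $AA^{(1)}A = A$ and $BB^{(1)}B = B$, and the choice $Z = X$ is precisely the standard device showing the parametrized family exhausts all solutions. There is nothing to add.
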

	The following result is given in \cite{rajeshkannan} which is in general not true as illustrated in the example given after the statement of the theorem.
	\begin{theorem}[\cite{rajeshkannan}, Theorem 3.3]\label{rajeshkannan3.3}
		Let $M=\left(\begin{array}{cc}A & B \\ C & D \\ \end{array}\right)$ be a complex partitioned matrix such that $N(D^*)\subseteq N(C^*)$ and $N(A^*)\subseteq N(B^*)$. Then $gppt(M,A)^\dag $ $=$ $gppt(M,D)$.
	\end{theorem}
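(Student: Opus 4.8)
The plan is to verify directly that $X:=gppt(M,D)$ is the Moore-Penrose inverse of $P:=gppt(M,A)$ by checking the four defining equations $PXP=P$, $XPX=X$, $(PX)^*=PX$, and $(XP)^*=XP$. The first step is to recast the hypotheses as projector identities. Since $N(A^*)=R(A)^\perp$ and $N(B^*)=R(B)^\perp$, the assumption $N(A^*)\subseteq N(B^*)$ is equivalent to $R(B)\subseteq R(A)$, hence to $AA^\dag B=B$; similarly $N(D^*)\subseteq N(C^*)$ is equivalent to $R(C)\subseteq R(D)$, hence to $DD^\dag C=C$. These two identities, together with the standard relations $AA^\dag A=A$, $A^\dag AA^\dag=A^\dag$ and their $D$-analogues, are all I expect to need for the first two axioms.

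Next I would compute the block products $PX$ and $XP$. After cancelling the cross terms involving $A^\dag BD^\dag C$ and applying $DD^\dag C=C$ and $AA^\dag B=B$, one obtains
\[
PX=\begin{pmatrix} A^\dag A & 0 \\ CA^\dag A - C & DD^\dag \end{pmatrix},
\qquad
XP=\begin{pmatrix} AA^\dag & BD^\dag D - B \\ 0 & D^\dag D \end{pmatrix}.
\]
From these, the equations $PXP=P$ and $XPX=X$ follow by another round of block multiplication, again using only the two projector identities above; this part is routine block bookkeeping and I anticipate no difficulty there.

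The crux, and the step I expect to be the real obstacle, is the pair of symmetry conditions. The diagonal blocks $A^\dag A,\,DD^\dag,\,AA^\dag,\,D^\dag D$ are Hermitian automatically, and one off-diagonal block of each product already vanishes. However, $(PX)^*=PX$ forces the remaining block $CA^\dag A-C$ to be zero, i.e. $CA^\dag A=C$, which is precisely $R(C^*)\subseteq R(A^*)$, equivalently $N(A)\subseteq N(C)$; and $(XP)^*=XP$ forces $BD^\dag D=B$, i.e. $R(B^*)\subseteq R(D^*)$, equivalently $N(D)\subseteq N(B)$. These are \emph{row-space} containments, whereas the stated hypotheses supply only the \emph{column-space} containments $R(B)\subseteq R(A)$ and $R(C)\subseteq R(D)$. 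Hence I expect the verification to go through for axioms (1) and (2) but to stall at (3) and (4): in general $gppt(M,D)$ will be only a $\{1,2\}$-inverse of $gppt(M,A)$, not the Moore-Penrose inverse. This is exactly the gap a counterexample would exploit, and the natural remedy is to adjoin the hypotheses $N(A)\subseteq N(C)$ and $N(D)\subseteq N(B)$, under which the two offending blocks vanish and all four axioms hold.
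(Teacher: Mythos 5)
Your analysis is correct, and it reaches exactly the paper's own verdict: this statement (quoted from Rajesh Kannan and Bapat) appears in the paper only to be \emph{refuted}, not proved. Immediately after it the authors give the counterexample $M=\left(\begin{smallmatrix}0&0\\1&1\end{smallmatrix}\right)$ with $A=B=0$, $C=D=1$, where both stated null-space hypotheses hold yet $gppt(M,A)^\dag\neq gppt(M,D)$; note that in this example the very condition you isolated fails, since $CA^\dag A=0\neq 1=DD^\dag C$, i.e.\ $N(A)\not\subseteq N(C)$. Your block computation is precisely the engine of the paper's Theorem \ref{main1}: $gppt(M,D)$ is a $\{1,2\}$-inverse of $gppt(M,A)$ --- in fact with \emph{no} hypotheses at all, since the cross terms involving $A^\dag BD^\dag C$ cancel identically in $PQP$ and $QPQ$, so your two projector identities $AA^\dag B=B$ and $DD^\dag C=C$ are not even needed for Penrose axioms (1) and (2) --- and Moore--Penrose equality holds if and only if $CA^\dag A=DD^\dag C$ and $AA^\dag B=BD^\dag D$, which under the stated hypotheses reduces to exactly your pair $CA^\dag A=C$ and $BD^\dag D=B$. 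Your proposed remedy, adjoining $N(A)\subseteq N(C)$ and $N(D)\subseteq N(B)$, is verbatim the corrected version recorded in the paper as Theorem \ref{mainsufficient} (Bisht, Ravindran and Sivakumar). So there is no gap in your reasoning; the ``stall'' you predicted at axioms (3) and (4) is genuine, and your diagnosis of where and why the theorem fails matches the paper's counterexample and its subsequent characterization.
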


	\begin{example}
		Let $M=\left(\begin{array}{cc}0 & 0 \\ 1 & 1 \\ \end{array}\right)$ with $A=B=0$ and $C=D=1$. Then by an easy computation, we can show that $gppt(M,A)=\left(\begin{array}{cc}0 & 0 \\ 0 & 1 \\ \end{array}\right)$ and $gppt(M,D)=\left(\begin{array}{cc}0 & 0 \\ -1 & 1 \\ \end{array}\right)$. Clearly, $N(D^*)=N(C^*)$ and $N(A^*)=N(B^*)$. In addition to the above, $gppt(M,A)=gppt(M,A)^\dag$. But  $gppt(M,A)^\dag\neq gppt(M,D)$.
	\end{example}
	In fact, a revised version of the Theorem \ref{rajeshkannan3.3}  was  proved in \cite{bishtravindranKCS} and it is given below for the sake of completeness.
	
	\begin{theorem}[\cite{bishtravindranKCS}, Theorem 4.1]\label{mainsufficient}
		Let $M=\left(\begin{array}{cc}A & B \\ C & D \\ \end{array}\right)$
		be a complex partitioned matrix such that
		 $N(A)\subseteq N(C)$, $N(A^*)\subseteq N(B^*)$, $N(D)\subseteq N(B)$ and $N(D^*)\subseteq N(C^*)$. Then $gppt(M,A)^\dag = gppt(M,D)$.
	\end{theorem}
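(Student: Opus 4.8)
The plan is to prove directly that $Q := gppt(M,D)$ is the Moore--Penrose inverse of $P := gppt(M,A)$; by uniqueness of the Moore--Penrose inverse this yields $gppt(M,A)^\dag = gppt(M,D)$. The first step is to translate the four null-space hypotheses into block identities, using that $A^\dag A$ and $AA^\dag$ are the (Hermitian) orthogonal projectors onto $R(A^*)$ and $R(A)$, and similarly for $D$. Concretely, $N(A)\subseteq N(C)$ is equivalent to $CA^\dag A = C$; $N(A^*)\subseteq N(B^*)$ to $AA^\dag B = B$; $N(D)\subseteq N(B)$ to $BD^\dag D = B$; and $N(D^*)\subseteq N(C^*)$ to $DD^\dag C = C$. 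These four identities, together with the standard relations $A^\dag A A^\dag = A^\dag$, $AA^\dag A = A$ (and their $D$-analogues), are all that the computation will use.

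Next I would compute the products $PQ$ and $QP$ block by block. The key simplification that drives the whole argument is that both collapse to block-diagonal projectors,
\[
PQ = \begin{pmatrix} A^\dag A & 0 \\ 0 & DD^\dag \end{pmatrix}, \qquad QP = \begin{pmatrix} AA^\dag & 0 \\ 0 & D^\dag D \end{pmatrix}.
\]
In each diagonal block the mixed terms $\pm A^\dag B D^\dag C$ (resp. $\pm B D^\dag C A^\dag$) cancel, leaving a single projector. The off-diagonal blocks are exactly where the hypotheses enter: the $(2,1)$ block of $PQ$ reduces to $CA^\dag A - DD^\dag C$, which vanishes because $CA^\dag A = C = DD^\dag C$, and symmetrically the $(1,2)$ block of $QP$ reduces to $-AA^\dag B + BD^\dag D$, which vanishes because $AA^\dag B = B = BD^\dag D$. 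Since $A^\dag A$, $AA^\dag$, $D^\dag D$, $DD^\dag$ are Hermitian, $PQ$ and $QP$ are Hermitian, which settles the third and fourth Penrose equations $(PQ)^* = PQ$ and $(QP)^* = QP$ simultaneously.

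It then remains to verify $PQP = P$ and $QPQ = Q$, which I would obtain by left-multiplying $P$ and $Q$ by the projectors just computed. For $PQP = P$ each block reduces to a one- or two-term simplification: $A^\dag A\, A^\dag = A^\dag$, $A^\dag A(-A^\dag B) = -A^\dag B$, $DD^\dag\, CA^\dag = CA^\dag$ (via $DD^\dag C = C$), and $DD^\dag(D - CA^\dag B) = D - CA^\dag B$. The equation $QPQ = Q$ is handled in the same way with the roles of $A$ and $D$ interchanged, now invoking $AA^\dag B = B$ together with the projector identities for $D$. Having checked all four Penrose equations, uniqueness of the Moore--Penrose inverse gives $P^\dag = Q$, as required.

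I expect the only real difficulty to be organizational rather than conceptual: the four products $PQ$, $QP$, $PQP$, $QPQ$ involve many block entries, and in each off-diagonal block one must apply precisely the right one of the four translated identities. The conceptual crux is noticing that $PQ$ and $QP$ are block-diagonal projectors --- once this is observed, the two Hermitian conditions are immediate and the two outer identities follow with little effort. It is also worth stating the equivalence between each null-space inclusion and its projector identity carefully, since the entire verification rests on those translations.
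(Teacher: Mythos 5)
Your proof is correct and follows essentially the same route as the paper: the quoted result is obtained there as the special case of Theorem \ref{main1} (the hypotheses give $CA^\dag A = C = DD^\dag C$ and $AA^\dag B = B = BD^\dag D$), and that theorem is proved by exactly your blockwise verification of the four Penrose equations for $P = gppt(M,A)$ and $Q = gppt(M,D)$, with the same collapse of $PQ$ and $QP$. The only cosmetic difference is that the paper notes $PQP = P$ and $QPQ = Q$ hold with no hypotheses whatsoever (i.e.\ $Q \in P\{1,2\}$ unconditionally), so the null-space conditions are needed only for the Hermitian-ness of $PQ$ and $QP$, whereas you invoke them in those outer verifications as well.
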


	Note that the above theorem has only given a sufficient condition. But one of the objectives of this paper is to give  sufficient as well as necessary conditions for the complex partitioned matrix $M=\left(\begin{array}{cc}A & B \\ C & D \\ \end{array}\right)$ to have $gppt(M,A)^\dag = gppt(M,D)$. Another objective is to generalize the rank condition in the following theorem by dropping null-space inclusions in the hypothesis.
	
		\begin{theorem}[\cite{rajeshkannan}, Theorem 4.2]\label{rajeshkannan4.2}
		Let $M=\left(\begin{array}{cc}A & B \\ C & D \\ \end{array}\right)$ be a complex partitioned matrix such that $N(A)\subseteq N(C)$ and $N(A^*)\subseteq N(B^*)$. If $A$ is range-Hermitian, then $rank(S(gppt(M,A)))=rank(S(M))$.
	\end{theorem}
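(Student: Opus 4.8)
The plan is to reduce the statement to a single congruence between the two Hermitian parts and then to track ranks through that congruence. Throughout write $G=gppt(M,A)$, let $S(X)=\tfrac12(X+X^{*})$ be the Hermitian (symmetric) part, and let $A\in\mathbb C^{p\times p}$ so that $x=\begin{pmatrix}x_1\\x_2\end{pmatrix}$ has $x_1\in\mathbb C^{p}$ and $x_2\in\mathbb C^{\,n-p}$. Since $A$ is range-Hermitian, $P:=AA^{\dagger}=A^{\dagger}A$ is the orthogonal projector onto $R(A)=R(A^{*})$, and the two hypotheses become the identities $B=PB$ (equivalently $R(B)\subseteq R(A)$, from $N(A^{*})\subseteq N(B^{*})$) and $C=CP$ (equivalently $R(C^{*})\subseteq R(A)$, from $N(A)\subseteq N(C)$). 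Setting $E=\begin{pmatrix}A&B\\0&I\end{pmatrix}$, the goal is the identity $S(M)=E^{*}S(G)E$.

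I would prove this identity at the level of Hermitian forms, using that $x^{*}S(X)x=\operatorname{Re}(x^{*}Xx)$ for every square $X$. Fix $x$ and put $\begin{pmatrix}y_1\\y_2\end{pmatrix}=Mx$, so $y_1=Ax_1+Bx_2$ and $y_2=Cx_1+Dx_2$; here $y_1\in R(A)$ because $R(B)\subseteq R(A)$. Then $Ex=\begin{pmatrix}y_1\\x_2\end{pmatrix}$, and a short computation using $CA^{\dagger}A=C$ gives $G(Ex)=\begin{pmatrix}A^{\dagger}Ax_1\\y_2\end{pmatrix}$, whence $(Ex)^{*}G(Ex)=y_1^{*}A^{\dagger}Ax_1+x_2^{*}y_2$. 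Because $A$ is range-Hermitian, $y_1\in R(A)=R(A^{\dagger}A)$ forces $A^{\dagger}A\,y_1=y_1$, so the first term is $y_1^{*}x_1$ and $(Ex)^{*}G(Ex)=y_1^{*}x_1+x_2^{*}y_2$. On the other hand $x^{*}Mx=x_1^{*}y_1+x_2^{*}y_2$, so the two expressions have the same real part. Since this holds for all $x$, the Hermitian forms of $S(M)$ and of $E^{*}S(G)E$ coincide, giving $S(M)=E^{*}S(G)E$.

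The remaining, and most delicate, point is to convert this congruence into equality of ranks: as $A$ may be singular, $E$ is singular, so a priori the identity only yields $\operatorname{rank}S(M)\le\operatorname{rank}S(G)$. I would handle this by analysing the subspace $\mathcal V:=R(E)=R(A)\oplus\mathbb C^{\,n-p}$. Since $E$ maps $\mathbb C^{n}$ onto $\mathcal V$, the pulled-back form $x\mapsto(Ex)^{*}S(G)(Ex)$ has the same rank as the restriction of $S(G)$ to $\mathcal V$, so $\operatorname{rank}S(M)=\operatorname{rank}\big(S(G)|_{\mathcal V}\big)$. It therefore suffices to show that restricting $S(G)$ to $\mathcal V$ loses no rank, i.e. that $\mathcal V^{\perp}\subseteq N(S(G))$. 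Now $\mathcal V^{\perp}=N(A^{*})\oplus\{0\}=N(A)\oplus\{0\}$ by range-Hermiticity, and for $w=\begin{pmatrix}w_1\\0\end{pmatrix}$ with $w_1\in N(A)=N(A^{*})$ one has $A^{\dagger}w_1=0$ and $(A^{\dagger})^{*}w_1=(A^{*})^{\dagger}w_1=0$ (because $N(A^{\dagger})=N(A^{*})$ and $N((A^{*})^{\dagger})=N(A)$), hence $Gw=0$ and $G^{*}w=0$, so $S(G)w=0$. This gives $\mathcal V^{\perp}\subseteq N(S(G))$ and therefore $\operatorname{rank}S(G)=\operatorname{rank}\big(S(G)|_{\mathcal V}\big)=\operatorname{rank}S(M)$.

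The main obstacle, as indicated, is exactly this last rank bookkeeping: the congruence itself is transparent, but because $E$ is not invertible one must verify the complementary fact that the subspace killed by $E$, namely $N(A)\oplus\{0\}$, already lies in the kernel of $S(G)$, so that neither pulling back along $E$ nor restricting to $R(E)$ changes the rank. Alternatively one could avoid the radical computation by checking that $gppt(G,A^{\dagger})=M$ under these hypotheses and applying the congruence a second time to get the reverse inequality $\operatorname{rank}S(G)\le\operatorname{rank}S(M)$; the self-contained route above is preferable.
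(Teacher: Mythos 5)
Your proof is correct, and it departs from the paper's argument in a genuine way in its second half. The paper never proves this cited theorem directly: it states it as background and instead proves the generalization, Theorem \ref{t1} (where the two null-space inclusions are weakened to the single condition $R(B+C^{*})\subseteq R(A)$), by \emph{two} congruences. Writing $P=gppt(M,A)$ (your $G$), the paper first shows $X^{*}(P+P^{*})X=M+M^{*}$ with $X=\bigl(\begin{smallmatrix}A&B\\0&I\end{smallmatrix}\bigr)$ --- your $E$ --- giving $\mathrm{rank}\,S(M)\le \mathrm{rank}\,S(P)$, and then establishes the dual congruence $Y^{*}(M+M^{*})Y=P+P^{*}$ with $Y=\bigl(\begin{smallmatrix}A^{\dagger}&-A^{\dagger}B\\0&I\end{smallmatrix}\bigr)$, using $AA^{\dagger}=A^{\dagger}A$, to get the reverse inequality. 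You obtain the same first congruence (via quadratic forms rather than block multiplication, a cosmetic difference), but you replace the second congruence by showing $R(E)^{\perp}=N(A)\oplus\{0\}\subseteq N(S(G))$, via $N(A^{\dagger})=N(A^{*})$, $N((A^{\dagger})^{*})=N(A)$ and range-Hermiticity to identify $N(A)=N(A^{*})$; combined with the surjectivity of $E$ onto $\mathcal{V}=R(A)\oplus\mathbb{C}^{n-p}$, this makes the single congruence rank-preserving, and your bookkeeping there is sound. The trade-off is generality versus self-containedness: the paper's two-congruence route runs under the weaker hypothesis of Theorem \ref{t1}, whereas your computation of $G(Ex)$ and your identification of $R(E)$ use the stronger separate conditions $CA^{\dagger}A=C$ and $R(B)\subseteq R(A)$, so your argument proves exactly the stated theorem but does not extend as written to Theorem \ref{t1} (in the paper's Remark example one has $B\notin R(A)$, so $R(E)\ne R(A)\oplus\mathbb{C}^{n-p}$ and your kernel step breaks). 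Conversely, your route explains \emph{why} a singular congruence suffices here --- the entire rank of $S(G)$ already lives on $R(E)$ --- and the alternative you sketch at the end, applying the congruence a second time after checking $gppt(G,A^{\dagger})=M$, is in substance exactly the paper's second congruence with $Y$.
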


	Note that the symmetric part of  $A\in \mathbb C^{n\times n} $ is defined by $ \frac{A+A^*}{2} $ and is denoted by $ S(A)$.  A matrix $A\in \mathbb C^{n\times n} $ is said to be almost skew-Hermitian if $rank(S(A))=1$. Properties of skew-Hermitian matrices can be found in \cite{projeshKCS,mcdonald} and the references cited therein.

\begin{theorem}[\cite{projeshKCS}, Theorem 4.1]\label{projeshkcs}
	Let $A$ be a square matrix. Then $A$ is almost skew-Hermitian if and only if $A^\dag$ is almost skew-Hermitian.
\end{theorem}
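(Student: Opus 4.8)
The plan is to exploit the two involutions $(A^{\dagger})^{\dagger}=A$ and $(A^{\dagger})^{*}=(A^{*})^{\dagger}$, so that it suffices to establish only the forward implication: if $rank(S(A))=1$ then $rank(S(A^{\dagger}))=1$. Once this is in hand, applying it to $A^{\dagger}$ in place of $A$ and using $(A^{\dagger})^{\dagger}=A$ immediately yields the converse. I would prove the forward implication in two self-contained steps: (i) a rank-one symmetric part forces $A$ to be range-Hermitian; and (ii) for a range-Hermitian matrix the $\dagger$ operation preserves the rank of the symmetric part exactly.

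For step (i) I would write $A=H+K$ with $H=S(A)=\tfrac{A+A^{*}}{2}$ Hermitian of rank one and $K=\tfrac{A-A^{*}}{2}$ skew-Hermitian, and record that $H=\sigma ww^{*}$ for a unit vector $w$ and a nonzero real $\sigma$. If $Ax=0$, then $Hx=-Kx$, whence $x^{*}Hx=-x^{*}Kx$; since $x^{*}Hx$ is real while $x^{*}Kx$ is purely imaginary, both must vanish. From $x^{*}Hx=\sigma|w^{*}x|^{2}=0$ and $\sigma\neq 0$ I get $w^{*}x=0$, so $Hx=0$ and then $Kx=0$. Thus $N(A)=N(H)\cap N(K)$, and the identical argument applied to $A^{*}=H-K$ gives $N(A^{*})=N(H)\cap N(K)$. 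Hence $N(A)=N(A^{*})$, equivalently $R(A)=R(A^{*})$, so $A$ is range-Hermitian.

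For step (ii) I would invoke the standard canonical form of a range-Hermitian matrix (see \cite{benisraelbook}): there exist a unitary $U$ and an invertible $T\in\mathbb{C}^{r\times r}$, $r=rank(A)$, with
\[
A=U\begin{pmatrix}T&0\\0&0\end{pmatrix}U^{*},\qquad A^{\dagger}=U\begin{pmatrix}T^{-1}&0\\0&0\end{pmatrix}U^{*}.
\]
Since $U$ is unitary, $rank(S(A))=rank(T+T^{*})$ and $rank(S(A^{\dagger}))=rank\big(T^{-1}+(T^{*})^{-1}\big)$. The identity $T^{-1}+(T^{*})^{-1}=T^{-1}(T+T^{*})(T^{*})^{-1}$, whose outer factors are invertible, shows these two ranks coincide. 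Combining (i) and (ii): $rank(S(A))=1$ forces $A$ to be range-Hermitian and hence $rank(S(A^{\dagger}))=rank(S(A))=1$, while the converse follows from the symmetry noted at the outset.

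The main obstacle is step (i): the desired equivalence is false for an arbitrary square matrix unless one first secures the range-Hermitian property, because the clean congruence $A^{\dagger}(A+A^{*})(A^{\dagger})^{*}=A^{\dagger}+(A^{\dagger})^{*}$, together with the rank equality it would yield, is only valid when $R(A)=R(A^{*})$. The role of the rank-one hypothesis is precisely that it is strong enough to recover $N(A)=N(A^{*})$, after which the reduction to the invertible block $T$ makes the rank identity transparent. I would take care to verify that the argument in (i) uses only $rank(H)=1$ and the Hermitian/skew-Hermitian splitting, so that it applies verbatim to $A^{*}$ and, in the converse direction, to $A^{\dagger}$.
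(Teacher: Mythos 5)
Your proposal is correct, but there is nothing in the paper to measure it against: the paper states this theorem without proof, quoting it from \cite{projeshKCS} (Theorem 4.1 there, proved for real almost skew-symmetric matrices), so your argument is a self-contained reconstruction rather than a variant of an internal proof. Judged on its own it is sound. Step (i) is the essential new content and you execute it correctly: a rank-one Hermitian matrix $H=\sigma ww^{*}$ is automatically semidefinite, so $x^{*}Hx=0$ forces $Hx=0$, and separating the real part $x^{*}Hx$ from the purely imaginary part $x^{*}Kx$ gives $N(A)=N(H)\cap N(K)=N(A^{*})$, i.e.\ $A$ is range-Hermitian. Step (ii) --- the canonical form $A=U\left(\begin{smallmatrix}T&0\\0&0\end{smallmatrix}\right)U^{*}$ together with $T^{-1}+(T^{*})^{-1}=T^{-1}(T+T^{*})(T^{*})^{-1}$ --- is precisely the rank-preservation principle for range-Hermitian matrices that the present paper establishes in Theorem \ref{t1} by a congruence argument (your identity is the trivially partitioned case, where $gppt(M,A)$ collapses to $A^{\dagger}$), so your proof in effect factors the quoted result as ``rank-one symmetric part implies range-Hermitian'' plus the paper's own machinery. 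Your cautionary remark that the detour through the range-Hermitian property is unavoidable is also accurate and worth making explicit: for $A=\left(\begin{smallmatrix}0&1&1\\0&0&1\\0&0&0\end{smallmatrix}\right)$ one checks $A^{\dagger}=\left(\begin{smallmatrix}0&0&0\\1&-1&0\\0&1&0\end{smallmatrix}\right)$, with $rank(S(A))=3$ but $rank(S(A^{\dagger}))=2$, so the Moore--Penrose inverse does not preserve the rank of the symmetric part in general, and the rank-one hypothesis is used exactly where you use it, namely to recover $N(A)=N(A^{*})$ before the congruence step; the final reduction of the converse to the forward implication via $(A^{\dagger})^{\dagger}=A$ is clean and complete.
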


			\begin{definition}[\cite{rajeshkannan2}]
			A real $n \times n$ square matrix $A$ is said to be a $P_{\+}$-matrix 
			if for each non zero $x\in R(A\s)$ there is an $i \in \{1, 2, \ldots, n\}$ such that $(x)_i(Ax)_i >0$.
			Equivalently, for any $x\in R(A\s)$ the inequalities $(x)_i(Ax)_i \leq 0$ for all $i=1, 2, \ldots, n$ imply that $x=0$.  It is proved in \cite{rajeshkannan2}
			that a real matrix $A$ is a $P_{\+}$-matrix if and only if $A\+$ is a
			$P_{\+}$-matrix.
		\end{definition}

	Given $Q \in \mathbb{R}^{n\times n}$ and $q \in \mathbb{R}^{n\times 1}$, the linear complementarity problem denoted by $LCP(Q,q)$ is to determine if there exists  $x \in \mathbb{R}^n$ such that $x\geq 0$, $y=Qx+q\geq 0$ and $x\s y=0$.
	
	\begin{definition}[\cite{MR3093070}]
		Let $M\in \mathbb{R}^{n \times n}$. Then $M$ is called a $R_\dagger$-matrix  if the solution for $LCP(M,0)$ in $R(M\s)$ is the zero solution.
	\end{definition}

		The following results in \cite{bishtravindranKCS} are used in the section to discuss inheritance properties of the generalized principal pivot transform.
			\begin{theorem}[\cite{bishtravindranKCS}, Theorem 3.1] \label{t31}
			Let $M=\left(\begin{array}{cc}A & B \\ C & D \\ \end{array}\right)$ be a real matrix and $F=D-CA\+B$. Then $R(C\s)\subseteq R(A\s)$, $R(B)\subseteq R(A)$, $R(C)\subseteq R(F)$ and $R(B\s)\subseteq R(F\s)$ if and only if $M\+=\left(\begin{array}{cc}A\+ +A\+ BF\+ CA\+ & -A\+B F\+ \\ -F\+ CA\+ & F\+ \\ \end{array}\right)$.
			
		\end{theorem}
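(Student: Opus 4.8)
The plan is to characterize $M^{\dagger}$ through the four Moore--Penrose equations $MXM=M$, $XMX=X$, $(MX)^{*}=MX$ and $(XM)^{*}=XM$, where $X$ denotes the proposed block matrix, and to translate each range inclusion into an idempotent identity: $R(B)\subseteq R(A)$ is equivalent to $AA^{\dagger}B=B$, $R(C^{*})\subseteq R(A^{*})$ to $CA^{\dagger}A=C$, $R(C)\subseteq R(F)$ to $FF^{\dagger}C=C$, and $R(B^{*})\subseteq R(F^{*})$ to $BF^{\dagger}F=B$ (each because $AA^{\dagger},A^{\dagger}A,FF^{\dagger},F^{\dagger}F$ are the orthogonal projectors onto the relevant subspaces). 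The computational heart of both directions is the block evaluation of $MX$ and $XM$; here the defining relation $F=D-CA^{\dagger}B$ is exactly what makes the $(2,2)$ entries collapse, giving $MX$ with $(2,2)$ block $FF^{\dagger}$ and $XM$ with $(2,2)$ block $F^{\dagger}F$, while the off-diagonal blocks come out as $(I-AA^{\dagger})BF^{\dagger}$ and $(I-FF^{\dagger})CA^{\dagger}$ for $MX$, and $A^{\dagger}B(I-F^{\dagger}F)$ and $F^{\dagger}C(I-A^{\dagger}A)$ for $XM$.

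For the sufficiency direction I would assume the four idempotent identities and substitute them into these block forms. The inclusions $R(B)\subseteq R(A)$ and $R(C)\subseteq R(F)$ annihilate the off-diagonal blocks of $MX$, reducing it to $\mathrm{diag}(AA^{\dagger},FF^{\dagger})$, which is manifestly Hermitian; symmetrically, $R(C^{*})\subseteq R(A^{*})$ and $R(B^{*})\subseteq R(F^{*})$ reduce $XM$ to $\mathrm{diag}(A^{\dagger}A,F^{\dagger}F)$, so the two Hermitian equations hold at once. For $MXM=M$ one multiplies $\mathrm{diag}(AA^{\dagger},FF^{\dagger})$ by $M$ and uses the same identities together with the consequence $FF^{\dagger}D=D$, which follows from $FF^{\dagger}F=F$ and $FF^{\dagger}C=C$; the equation $XMX=X$ follows by multiplying $\mathrm{diag}(A^{\dagger}A,F^{\dagger}F)$ into $X$ and collapsing with $A^{\dagger}AA^{\dagger}=A^{\dagger}$ and $F^{\dagger}FF^{\dagger}=F^{\dagger}$. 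By uniqueness of the Moore--Penrose inverse this gives $M^{\dagger}=X$.

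The necessity direction is where the real idea lies. Assuming $M^{\dagger}=X$, the products $MX=MM^{\dagger}$ and $XM=M^{\dagger}M$ are automatically the orthogonal projectors onto $R(M)$ and $R(M^{*})$, hence Hermitian. I would exploit the Hermitian symmetry on the off-diagonal of $MX$: the $(1,2)$ block $(I-AA^{\dagger})BF^{\dagger}$ has columns in $R(A)^{\perp}$, whereas the conjugate transpose of the $(2,1)$ block has columns in $R(A)$, so being equal they must both vanish. The analogous argument on $XM$, using the orthogonal splitting into $R(A^{*})$ and $R(A^{*})^{\perp}$, kills its off-diagonal too. This forces $MX=\mathrm{diag}(AA^{\dagger},FF^{\dagger})$ and $XM=\mathrm{diag}(A^{\dagger}A,F^{\dagger}F)$, whence $R(M)=R(MM^{\dagger})=R(A)\oplus R(F)$ and $R(M^{*})=R(M^{\dagger}M)=R(A^{*})\oplus R(F^{*})$. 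Reading the conditions off the block columns finishes the proof: the second block column of $M$ forces $R(B)\subseteq R(A)$ and the first forces $R(C)\subseteq R(F)$, while the block columns of $M^{*}$ force $R(C^{*})\subseteq R(A^{*})$ and $R(B^{*})\subseteq R(F^{*})$.

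The main obstacle is this necessity step, specifically turning the Hermitian symmetry of $MX$ and $XM$ into the vanishing of their off-diagonal blocks. The observation that makes it painless is that each off-diagonal block and the conjugate transpose of its partner have column spaces lying in mutually orthogonal subspaces ($R(A)$ versus $R(A)^{\perp}$, and $R(A^{*})$ versus $R(A^{*})^{\perp}$), so equality forces both to be zero. Everything else is routine block algebra organized around the identity $F=D-CA^{\dagger}B$.
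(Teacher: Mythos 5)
You should first note that the paper you are being compared against does not actually prove this statement: Theorem~\ref{t31} is quoted verbatim from \cite{bishtravindranKCS} (their Theorem 3.1) as a preliminary result, with no proof supplied, so your argument can only be judged on its own merits --- and on those merits it is correct, and it follows the natural route (verification of the four Penrose equations) that the cited source also takes. Your block computations check out: $MX$ has $(1,2)$ block $(I-AA^\dagger)BF^\dagger$, $(2,1)$ block $(I-FF^\dagger)CA^\dagger$ and $(2,2)$ block $FF^\dagger$, with the $(2,1)$ and $(2,2)$ collapses indeed driven by $CA^\dagger B=D-F$; the symmetric statements hold for $XM$; and the orthogonality device in your necessity step is sound, since $(I-AA^\dagger)BF^\dagger$ has columns in $R(A)^\perp$ while the adjoint of $(I-FF^\dagger)CA^\dagger$ has columns in $R((A^\dagger)^*)=R(A)$, so Hermitian symmetry of $MM^\dagger$ forces both to vanish, and likewise for $M^\dagger M$ with $R(A^*)$ against $N(A)$. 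Reading the four inclusions off the block columns of $M$ and $M^*$ from $R(M)=R(A)\times R(F)$ and $R(M^*)=R(A^*)\times R(F^*)$ is a clean finish that avoids chasing the identities $CA^\dagger A=C$ etc.\ individually out of the Penrose equations. One presentational slip worth fixing: the $(1,1)$ blocks are not simply $AA^\dagger$ and $A^\dagger A$; they carry correction terms $-(I-AA^\dagger)BF^\dagger CA^\dagger$ and $-A^\dagger BF^\dagger C(I-A^\dagger A)$, which you never display. This costs you nothing logically --- in the sufficiency direction they vanish under $AA^\dagger B=B$ and $CA^\dagger A=C$, and in the necessity direction each is a multiple (on the appropriate side) of an off-diagonal block you have already shown to be zero, so $MX=\mathrm{diag}(AA^\dagger,FF^\dagger)$ and $XM=\mathrm{diag}(A^\dagger A,F^\dagger F)$ do follow --- but a referee would want those terms written down and dispatched explicitly rather than silently absorbed into ``routine block algebra.''
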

		
		\begin{theorem}[\cite{bishtravindranKCS}, Theorem 3.2]
			Let $M=\left(\begin{array}{cc}A & B \\ C & D \\ \end{array}\right)$ be a real matrix and $G=A-BD\+C$. Then $R(B\s)\subseteq R(D\s)$, $R(C)\subseteq R(D)$, $R(B)\subseteq R(G)$ and $R(C\s)\subseteq R(G\s)$ if and only if $M\+=\left(\begin{array}{cc}G\+ & -G\+B D\+ \\ -D\+ CG\+ & D\+ + D\+CG\+ BD\+ \\ \end{array}\right)$.
			
		\end{theorem}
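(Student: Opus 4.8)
The plan is to derive this statement from the preceding theorem (Theorem~\ref{t31}) by a symmetry argument, observing that Theorem 3.2 is exactly the form Theorem 3.1 takes once the roles of the two diagonal blocks $A$ and $D$ are interchanged. First I would introduce the block permutation matrix $P=\left(\begin{array}{cc} 0 & I \\ I & 0 \end{array}\right)$, which is symmetric and orthogonal, so that $P\s=P$ and $P^2=I$. A direct block multiplication gives $M':=PMP=\left(\begin{array}{cc} D & C \\ B & A \end{array}\right)$; writing its blocks as $A'=D$, $B'=C$, $C'=B$, $D'=A$, the Schur complement of $M'$ with respect to its leading block is $D'-C'(A')\+B'=A-BD\+C=G$.

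Next I would apply Theorem~\ref{t31} to $M'$, with $F'=G$ playing the role of $F$. Its four range hypotheses $R((C')\s)\subseteq R((A')\s)$, $R(B')\subseteq R(A')$, $R(C')\subseteq R(F')$ and $R((B')\s)\subseteq R((F')\s)$ translate, under the above substitution, into $R(B\s)\subseteq R(D\s)$, $R(C)\subseteq R(D)$, $R(B)\subseteq R(G)$ and $R(C\s)\subseteq R(G\s)$ --- precisely the four conditions in the present statement. Thus the hypotheses of the two theorems correspond exactly under $P$.

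The remaining ingredient is the transfer of the Moore-Penrose inverse across the permutation. Using the standard identity $(UXV)\+=V\s X\+ U\s$ valid for orthogonal $U,V$, together with $P\s=P$, I obtain $(M')\+=(PMP)\+=P\,M\+\,P$, whence $M\+=P\,(M')\+\,P$ since $P^2=I$. Substituting the explicit formula that Theorem~\ref{t31} supplies for $(M')\+$, namely $\left(\begin{array}{cc} D\+ + D\+CG\+BD\+ & -D\+CG\+ \\ -G\+BD\+ & G\+ \end{array}\right)$, and conjugating by $P$ (which swaps both the block rows and the block columns) reproduces exactly the asserted expression for $M\+$.

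Since Theorem~\ref{t31} is itself an equivalence and the permutation correspondence is a bijection on both the range conditions and the inverse formulas, the two directions of the ``if and only if'' come out together from this single chain of equivalences, with no separate argument needed for necessity and sufficiency. The one point that genuinely requires verification --- and the natural candidate for the main obstacle --- is the identity $(PMP)\+=P\,M\+\,P$; this follows from the uniqueness of the Moore-Penrose inverse together with the fact that conjugation by an orthogonal matrix preserves all four Penrose equations, which I would confirm by substituting $P\,M\+\,P$ directly into the four defining relations for $(PMP)$.
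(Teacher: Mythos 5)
Your proposal is correct; note, however, that the paper itself offers no proof of this statement --- it is quoted verbatim from the cited reference of Bisht, Ravindran and Sivakumar as a preliminary result, so your argument is being compared against the natural direct route (verifying the four Penrose equations for the displayed block matrix, as one does for Theorem~\ref{t31}) rather than against an in-paper proof. Your symmetry reduction is sound and all substitutions check out: with $M'=\left(\begin{smallmatrix} D & C\\ B & A\end{smallmatrix}\right)$ one gets $F'=D'-C'(A')\+B'=A-BD\+C=G$, the four range inclusions of Theorem~\ref{t31} transcribe exactly into the four hypotheses here, and since the Moore--Penrose identity under orthogonal conjugation holds unconditionally, the full equivalence (both directions at once) transfers, which is precisely what makes this cleaner than proving necessity and sufficiency separately. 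The one point to tighten is your claim $P\s=P$: the statement does not require $A$ and $D$ to be square or of equal size, so a single symmetric block-swap matrix does not exist in general. You need two permutations, $P_1=\left(\begin{smallmatrix}0 & I_{m_2}\\ I_{m_1} & 0\end{smallmatrix}\right)$ acting on the rows and $P_2=\left(\begin{smallmatrix}0 & I_{n_2}\\ I_{n_1} & 0\end{smallmatrix}\right)$ on the columns, where $A$ is $m_1\times n_1$ and $D$ is $m_2\times n_2$; each $P_i$ is orthogonal but satisfies $P_i\s\neq P_i$ unless the two identity blocks match. The repair is mechanical: $M'=P_1MP_2$, and $(P_1MP_2)\+=P_2\s M\+ P_1\s$ by the standard identity $(UXV)\+=V\s X\+U\s$ for orthogonal $U,V$, whence $M\+=P_2\,(M')\+\,P_1$, and conjugation still swaps block rows and columns to yield the asserted formula. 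With that adjustment your derivation is complete, and it buys genuine economy over the direct verification: it exhibits Theorem~\ref{t31} and the present statement as literally the same theorem read through a permutation of the blocks.
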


We provide a general version of the following theorems in the paper.
		
		\begin{theorem}[\cite{bishtravindranKCS}, Theorem 5.10]\label{t510}
			Let $M=\begin{pmatrix}A& B\\ C& D\end{pmatrix}$ with $A$, $B$, $C$ and $D$ be square matrices of same order satisfying $R(B) \subseteq R(A)$, $R(C\s)\subseteq R(A\s)$, $R(C) \subseteq R(F)$ and $R(B\s) \subseteq R(F\s)$. If $M$ is a $R_\dagger$-matrix, then $gppt(M,A)$ is a $R_\dagger$-matrix.
			
		\end{theorem}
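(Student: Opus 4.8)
The plan is to transfer a solution of $LCP(gppt(M,A),0)$ lying in $R(gppt(M,A)^*)$ to a solution of $LCP(M,0)$ lying in $R(M^*)$ by means of the classical principal-pivot variable exchange, and then to invoke the hypothesis that $M$ is an $R_\dagger$-matrix. Write $N=gppt(M,A)=\begin{pmatrix}A^\dagger & -A^\dagger B\\ CA^\dagger & F\end{pmatrix}$ with $F=D-CA^\dagger B$, and suppose $z=\begin{pmatrix}z_1\\ z_2\end{pmatrix}\in R(N^*)$ solves $LCP(N,0)$, that is $z\geq 0$, $w:=Nz\geq 0$ and $z^*w=0$; partition $w=\begin{pmatrix}w_1\\ w_2\end{pmatrix}$. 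Mimicking the pivot exchange I set $x=\begin{pmatrix}w_1\\ z_2\end{pmatrix}$ and $y=\begin{pmatrix}z_1\\ w_2\end{pmatrix}$. Then $x,y\geq 0$ and $x^*y=z^*w=0$ hold immediately, because the exchange merely permutes the coordinates of the complementary pair, so the substance of the proof is to check the two algebraic facts $y=Mx$ and $x\in R(M^*)$.

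First I would extract range information from $z\in R(N^*)$. Computing $N^*$ and writing $z=N^*u$ one finds $z_1=(A^*)^\dagger(u_1+C^*u_2)$ and $z_2=-B^*(A^*)^\dagger u_1+F^*u_2$; hence $z_1\in R((A^*)^\dagger)=R(A)$ unconditionally, and $z_2\in R(F^*)$ because $R(B^*)\subseteq R(F^*)$. Granting these, the identity $y=Mx$ is routine: the block equation $y_2=Cx_1+Dx_2$ is just the definition of $F$, while $y_1=Ax_1+Bx_2$ collapses to $AA^\dagger z_1=z_1$ once $AA^\dagger B=B$ (from $R(B)\subseteq R(A)$) is used, and $AA^\dagger z_1=z_1$ is exactly the membership $z_1\in R(A)$ already established.

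To obtain $x\in R(M^*)=R(M^\dagger)$ I would use the explicit formula for $M^\dagger$ furnished by Theorem \ref{t31}, which applies precisely because the four range conditions in the hypothesis of the present theorem coincide with its hypotheses. Substituting $y=Mx$ and expanding $M^\dagger y$ block by block, the remaining projector identities $BF^\dagger F=B$ (from $R(B^*)\subseteq R(F^*)$) and $F^\dagger F z_2=z_2$ (from $z_2\in R(F^*)$) reduce the computation to $M^\dagger y=x$. Since $M^\dagger y=M^\dagger Mx\in R(M^\dagger)=R(M^*)$, this gives $x\in R(M^*)$. Thus $x$ is a solution of $LCP(M,0)$ inside $R(M^*)$, so the $R_\dagger$-property of $M$ forces $x=0$; then $y=Mx=0$ yields $z_1=y_1=0$ and $z_2=x_2=0$, whence $z=0$ and $N=gppt(M,A)$ is an $R_\dagger$-matrix.

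The main obstacle is the passage between the two range spaces, namely the claims $z_1\in R(A)$, $z_2\in R(F^*)$ and the reconstruction $x=M^\dagger y$; this is exactly where the four range hypotheses are consumed, through the closed form of $M^\dagger$ in Theorem \ref{t31} and the projector identities $AA^\dagger B=B$ and $BF^\dagger F=B$. Once those are secured, the complementarity and nonnegativity bookkeeping transfers automatically, since the pivot exchange only relabels the entries of the complementary pair.
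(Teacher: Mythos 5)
Your proof is correct and follows essentially the same route as the paper's: the paper actually proves the generalization Theorem~\ref{t99} (of which Theorem~\ref{t510} is the special case where $R(B)\subseteq R(A)$ and $R(C^{*})\subseteq R(A^{*})$ force $M_0=M$), using exactly your pivot exchange $x=(w_1,z_2)^{T}$, $y=(z_1,w_2)^{T}$, the same range memberships $z_1\in R(A)$, $z_2\in R(F^{*})$, and Theorem~\ref{t31} to place $x$ in $R(M^{*})$ before applying the $R_\dagger$-property to force $x=0$ and then $z=0$. The only cosmetic difference is that you verify $y=Mx$ and $M^\dagger y=x$ by direct block computation, where the paper instead invokes Theorem~\ref{t11} together with the identity $M^\dagger M=\begin{pmatrix}A^\dagger A&0\\ 0&F^\dagger F\end{pmatrix}$.
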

		
		\begin{theorem}[\cite{bishtravindranKCS}, Theorem 5.11]\label{t511}
			Let $M=\begin{pmatrix}A& B\\ C& D\end{pmatrix}$ with $A$, $B$, $C$ and $D$ be square matrices of same order satisfying $R(B\s) \subseteq R(D\s)$, $R(C)\subseteq R(D)$, $R(B) \subseteq R(G)$ and $R(C\s) \subseteq R(G\s)$. If $M$ is a $R_\dagger$-matrix, then $gppt(M,D)$ is a $R_\dagger$-matrix.
			
\end{theorem}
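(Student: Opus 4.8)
The plan is to exploit the domain--range exchange that is built into the generalized principal pivot transform: a solution of $LCP(gppt(M,D),0)$ lying in $R(gppt(M,D)\s)$ will be pulled back, by interchanging the second block of the input with the second block of the output, to a solution of $LCP(M,0)$ lying in $R(M\s)$. Since $M$ is assumed to be an $R_\dagger$-matrix, this pulled-back solution must vanish, and from its vanishing I will recover that the original solution was already zero. Throughout, write $N=gppt(M,D)=\begin{pmatrix}G & BD\+\\ -D\+C & D\+\end{pmatrix}$ with $G=A-BD\+C$.

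First I would take an arbitrary $\binom{u}{v}\in R(N\s)$ solving $LCP(N,0)$, so that $\binom{u}{v}\ge 0$, $\binom{p}{q}:=N\binom{u}{v}=\binom{Gu+BD\+v}{-D\+Cu+D\+v}\ge 0$, and $u\s p+v\s q=0$; as all four vectors are nonnegative, both $u\s p=0$ and $v\s q=0$. I then perform the pivotal interchange by setting $x_1=u$, $x_2=q=-D\+Cu+D\+v$, $y_1=p=Gu+BD\+v$ and $y_2=v$. Writing $\binom{u}{v}=N\s\binom{a}{b}$ and reading off the second block row of $N\s=\begin{pmatrix}G\s & -C\s(D\+)\s\\ (D\+)\s B\s & (D\+)\s\end{pmatrix}$ gives $v=(D\+)\s(B\s a+b)\in R((D\+)\s)=R(D)$, so that $DD\+v=v$; combining this with $DD\+C=C$ (which is the hypothesis $R(C)\subseteq R(D)$), a direct block computation yields $M\binom{x_1}{x_2}=\binom{y_1}{y_2}$.

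Next I would transfer the complementarity data, which is immediate because the interchange is symmetric: $\binom{x_1}{x_2}\ge 0$ and $\binom{y_1}{y_2}\ge 0$ hold coordinate-wise from the nonnegativity of $u,v,p,q$, while $x_1\s y_1+x_2\s y_2=u\s p+v\s q=0$. The decisive step is to certify that $\binom{x_1}{x_2}\in R(M\s)=R(M\+)$. Here I would invoke Theorem 3.2 of \cite{bishtravindranKCS}, whose four range hypotheses are exactly the ones assumed here, to use the explicit form $M\+=\begin{pmatrix}G\+ & -G\+BD\+\\ -D\+CG\+ & D\++D\+CG\+BD\+\end{pmatrix}$. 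Parametrizing $R(M\+)$ as the set of vectors $\binom{G\+w}{-D\+CG\+w+D\+d}$, it suffices to write $u=G\+w$ for some $w$; this is possible since $u=G\s a-C\s(D\+)\s b\in R(G\s)=R(G\+)$, where the membership uses $R(C\s)\subseteq R(G\s)$. Taking $d=v$ then places $\binom{x_1}{x_2}=\binom{u}{-D\+Cu+D\+v}$ in $R(M\+)=R(M\s)$.

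Finally, since $M$ is an $R_\dagger$-matrix and $\binom{x_1}{x_2}\in R(M\s)$ solves $LCP(M,0)$, I conclude $\binom{x_1}{x_2}=0$; hence $u=0$ and $D\+v=0$, and since $v\in R(D)$ we get $v=DD\+v=D(D\+v)=0$. Thus the only $R(N\s)$-solution of $LCP(N,0)$ is the zero solution, i.e.\ $gppt(M,D)$ is an $R_\dagger$-matrix. I expect the genuine obstacle to be the range-membership claim $\binom{x_1}{x_2}\in R(M\s)$: the nonnegativity and complementarity conditions transfer purely formally, whereas pinning down membership in $R(M\s)$ is precisely where all four range inclusions are consumed (through the explicit Moore--Penrose block formula of Theorem 3.2), and where the singular setting genuinely departs from the classical invertible case in which $R(M\s)$ is the entire space.
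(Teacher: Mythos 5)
Your proposal is correct and takes essentially the same approach as the paper, which obtains this statement as the special case $M_1=M$ of Theorem \ref{end_thm} (proved as in Theorem \ref{t99}): exchange an $LCP(gppt(M,D),0)$-solution in $R(gppt(M,D)\s)$ into an $LCP(M,0)$-solution in $R(M\s)$, with the four range inclusions consumed exactly where you consume them, namely in the block Moore--Penrose formula (Theorem 3.2 of \cite{bishtravindranKCS}). Your only deviations are cosmetic: you perform the exchange by direct block computation rather than citing Theorem \ref{t11}, and you certify $\left(\begin{smallmatrix}u\\ q\end{smallmatrix}\right)\in R(M\s)$ by explicitly parametrizing $R(M\+)$ instead of via the projection identity $M\+M=\left(\begin{smallmatrix}G\+G & 0\\ 0 & D\+D\end{smallmatrix}\right)$.
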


	\section{Characterizations and Properties}
	 We start with some characterizations  to express the Moore-Penrose inverse of $ M $ in terms of generalized principal pivot transform of a suitable matrix.

	\begin{theorem}\label{main1}
		Let $M=\left(\begin{array}{cc}A & B \\ C & D \\ \end{array}\right)$. Then the following are equivalent :
		\begin{enumerate}
			\item $gppt(M,A)^\dag = gppt(M,D)$ ;
			\item $CA^\dag A = DD^\dag C$ and  $AA^\dag B = BD^\dag D$ ;
			\item $N(A)\subseteq N(D^*C)$, $N(A^*)\subseteq N(DB^*)$, $N(D)\subseteq N(A^*B)$ and $N(D^*)\subseteq N(AC^*)$.
		\end{enumerate}
	\end{theorem}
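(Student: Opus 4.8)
The plan is to write $N=gppt(M,A)$ and $P=gppt(M,D)$, and to study the two products $NP$ and $PN$ directly, since statement (1) asserts precisely that $P=N^\dag$. A block multiplication, using only $A^\dag A A^\dag=A^\dag$, $DD^\dag D=D$ and the definitions $F=D-CA^\dag B$, $G=A-BD^\dag C$ (the cross terms $CA^\dag BD^\dag C$ and $BD^\dag CA^\dag B$ cancelling unconditionally), gives
\[
NP=\begin{pmatrix} A^\dag A & 0 \\ CA^\dag A-DD^\dag C & DD^\dag \end{pmatrix},\qquad
PN=\begin{pmatrix} AA^\dag & BD^\dag D-AA^\dag B \\ 0 & D^\dag D \end{pmatrix}.
\]
The off-diagonal blocks are exactly the two defects measured in (2), while the diagonal blocks are the Hermitian projections $A^\dag A,DD^\dag,AA^\dag,D^\dag D$. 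This single computation is the engine of the whole proof.

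For the implication $(1)\Rightarrow(2)$, if $P=N^\dag$ then $NP=NN^\dag$ and $PN=N^\dag N$ are Hermitian by the Penrose axioms; reading off the off-diagonal blocks of the two displayed matrices forces $CA^\dag A-DD^\dag C=0$ and $BD^\dag D-AA^\dag B=0$, which is exactly (2). Conversely, under (2) both products collapse to the block-diagonal Hermitian idempotents $\mathrm{diag}(A^\dag A,DD^\dag)$ and $\mathrm{diag}(AA^\dag,D^\dag D)$, so the symmetry equations $(NP)^*=NP$ and $(PN)^*=PN$ are immediate. The two remaining Penrose equations $NPN=N$ and $PNP=P$ then reduce, block by block, to identities such as $DD^\dag CA^\dag=CA^\dag$ and $AA^\dag BD^\dag=BD^\dag$, each of which follows from (2) together with $A^\dag A A^\dag=A^\dag$ and $DD^\dag D=D$; hence $P=N^\dag$ and (1) holds.

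For the equivalence $(2)\iff(3)$, I would translate each null-space inclusion into a projection identity via $N(X^*Y)=N(XX^\dag Y)$ and $R(U)\subseteq R(V)\iff VV^\dag U=U$, using that $A^\dag A,AA^\dag,D^\dag D,DD^\dag$ are the orthogonal projections onto $R(A^*),R(A),R(D^*),R(D)$. Concretely, $N(A)\subseteq N(D^*C)$ becomes $DD^\dag C=DD^\dag CA^\dag A$, and $N(D^*)\subseteq N(AC^*)$ becomes, after taking adjoints, $CA^\dag A=DD^\dag CA^\dag A$; these two together are equivalent to the single equation $CA^\dag A=DD^\dag C$. The symmetric pair $N(D)\subseteq N(A^*B)$ and $N(A^*)\subseteq N(DB^*)$ collapses in the same way to $AA^\dag B=BD^\dag D$. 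Thus the four inclusions of (3) pair up exactly into the two equations of (2).

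The block algebra is lengthy but mechanical, and the one genuine step is recognizing the clean form of $NP$ and $PN$, after which everything is bookkeeping. The point demanding the most care is verifying $NPN=N$ and $PNP=P$ in their nontrivial diagonal blocks, where one must invoke (2) to reduce $DD^\dag CA^\dag B$ to $CA^\dag B$ and $AA^\dag BD^\dag C$ to $BD^\dag C$; I expect this to be the main obstacle, though it remains a short computation once the projection identities equivalent to (2) are in hand.
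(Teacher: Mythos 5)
Your proposal is correct and takes essentially the same approach as the paper: the same block computation of the products $NP$ and $PN$, Hermitian-ness of those products giving $(1)\Leftrightarrow(2)$, and the same translation of the null-space inclusions in $(3)$ into projection identities that pair up into the two equations of $(2)$. The only cosmetic difference is that the paper notes $gppt(M,D)\in gppt(M,A)\{1,2\}$ holds unconditionally (the cross terms in $NPN=N$ and $PNP=P$ cancel without invoking $(2)$), whereas you invoke $(2)$ there — harmless, since that is precisely the direction in which $(2)$ is assumed.
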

	\begin{proof}
		(1)$ \iff $ (2) :    Let $$P =gppt(M,A)= \left(\begin{array}{cc}A^\dag & -A^\dag B \\ CA^\dag & D-CA^\dag B \\ \end{array}\right)$$ and $$Q= gppt(M,D)=\left(\begin{array}{cc}A-BD^\dag C &  BD^\dag \\ -D^\dag C & D^\dag  \\ \end{array}\right).$$   Then $$PQ = \left(\begin{array}{cc}A^\dag A & 0 \\ CA^\dag A-DD^\dag C & DD^\dag \\ \end{array}\right)$$ and $$QP = \left(\begin{array}{cc}A A^\dag & BD^\dag D-AA^\dag B \\ 0 & D^\dag D \\ \end{array}\right).$$ Thus $PQ$ and $QP$ are Hermitian if and only if $CA^\dag A = DD^\dag C$ and $AA^\dag B = BD^\dag D$. It is also easy to verify that $Q\in P\{1,2\}$. Therefore $P^\dag = Q$ if and only if $CA^\dag A = DD^\dag C$ and $AA^\dag B = BD^\dag D$.

		(2) $\implies $ (3) : Suppose that $CA^\dag A = DD^\dag C$ and  $AA^\dag B = BD^\dag D$. Then pre-multiplying by $D^\dag$ and post-multiplying by $A^\dag $ in the first equation, we get $D^\dag CA^\dag A = D^\dag C$ and  $CA^\dag  = DD^\dag CA^\dag $, respectively. Thus $N(A)\subseteq N(D^*C)$ and $N(D^*)\subseteq N(AC^*)$. Similarly, if we pre-multiply by $A^\dag $ and post-multiply by $D^\dag $ in the second equation, we will get the other two inclusions.
		
		(3) $\implies$ (2) :
		Suppose that $N(A)\subseteq N(D^*C)$, $N(A^*)\subseteq N(DB^*)$, $N(D)\subseteq N(A^*B)$ and $N(D^*)\subseteq N(AC^*)$. It is straightforward to prove that $N(D^*C)=N(D^\dag C)$ and $N(AC^*)=N((A^\dag)^*C^*)$. Thus $N(A)\subseteq N(D^\dag C)$ and $N(D^*)\subseteq N((A^\dag )^*C^*)$ which are equivalent to $D^\dag CA^\dag A=D^\dag C$ and $DD^\dag CA^\dag = CA^\dag $ respectively. Pre-multiply by $D$ in the first equation and using the second equation we get that $CA^\dag A=DD^\dag C$. Similarly, we can show the other relation. This completes the proof of the theorem.
	\end{proof}

	The following example shows that the assumptions in  Theorem \ref{main1} are relatively weaker than the ones given in Theorem \ref{mainsufficient}.
	\begin{example}
		Let $M=\left(\begin{array}{cc}A & B \\C & D \\ \end{array} \right) $ with $A=D=\left(\begin{array}{cc}1 & 1 \\1 & 1 \\ \end{array} \right) $ and $B=C=\left(\begin{array}{cc}0 & 1 \\1 & 0 \\ \end{array} \right) $. Then $A^\dag = D^\dag= \frac{1}{4}A$. Hence $CA^\dag A = DD^\dag C$ and $AA^\dag B=BD^\dag D$ but $N(A)\nsubseteq N(C)$, $N(A^*)\nsubseteq N(B^*)$, $N(D)\nsubseteq N(B)$ and $N(D^*)\nsubseteq N(C^*)$. Further,
		$$gppt(M,A) =\frac{1}{4}\left(\begin{array}{cc}A & -A \\A & 3A \\ \end{array} \right)$$ and $$gppt(M,A)^\dag =\frac{1}{4}\left(\begin{array}{cc}3A & A \\-A & A \\ \end{array} \right) = gppt(M,D).$$ \end{example}
	
	If the generalized principal pivot transform with respect to suitable matrices is applied twice to a complex partitioned matrix $ M $, we have found some  conditions under which one would get back to the same matrix $ M $. One way of implications of the following result has already been proved by Bisht et. al. (\cite{bishtravindranKCS}, Lemma 4.1). We shall now prove that the sufficient conditions given in the said results are necessary as well.
	
	\begin{theorem}\label{mformula}
		Let $M=\left(\begin{array}{cc}A & B \\C & D \\ \end{array} \right) $. Then the following statements hold true.
		\begin{enumerate}
			\item $gppt(gppt(M,A),A^\dag )=M$ if and only if $N(A)\subseteq N(C)$ and $N(A^*)\subseteq N(B^*)$.
			\item $gppt(gppt(M,D),D^\dag )=M$ if and only if $N(D)\subseteq N(B)$ and $N(D^*)\subseteq N(C^*)$.
		\end{enumerate}
	\end{theorem}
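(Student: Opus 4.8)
The plan is to prove part (1) by a direct block computation and then obtain part (2) by the completely analogous argument, pivoting on the lower-right block instead of the upper-left one. For part (1), I would first set
$$P = gppt(M,A) = \left(\begin{array}{cc}A^\dag & -A^\dag B \\ CA^\dag & D-CA^\dag B \\ \end{array}\right)$$
and then compute $gppt(P, A^\dag)$ straight from the definition, pivoting on the $(1,1)$ block $A^\dag$ of $P$. The crucial algebraic inputs are the standard identity $(A^\dag)^\dag = A$ together with $A^\dag A A^\dag = A^\dag$. A short simplification should collapse the $(2,2)$ block back to $D$ and give
$$gppt(gppt(M,A), A^\dag) = \left(\begin{array}{cc}A & AA^\dag B \\ CA^\dag A & D \\ \end{array}\right).$$

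With this formula in hand, equating the right-hand side with $M$ reduces the entire statement to the two matrix equations $AA^\dag B = B$ and $CA^\dag A = C$, since the diagonal blocks match automatically. The remaining step is to translate these into the null-space inclusions in the statement. Here I would use that $AA^\dag$ is the orthogonal projector onto $R(A)$, so $AA^\dag B = B$ holds if and only if $R(B) \subseteq R(A)$, which is equivalent to $N(A^*) \subseteq N(B^*)$; dually, $A^\dag A$ is the orthogonal projector onto $R(A^*) = N(A)^\perp$, so $CA^\dag A = C$ is equivalent to $C(I - A^\dag A) = 0$, i.e. to $N(A) \subseteq N(C)$. Combining these gives the stated equivalence.

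Part (2) is handled identically, starting from $Q = gppt(M,D)$ and computing $gppt(Q, D^\dag)$ by pivoting on its $(2,2)$ block $D^\dag$; the same manipulations yield $gppt(gppt(M,D),D^\dag) = \left(\begin{array}{cc}A & BD^\dag D \\ DD^\dag C & D \\ \end{array}\right)$, and equating with $M$ leaves $BD^\dag D = B$ and $DD^\dag C = C$. These translate, respectively, into $N(D) \subseteq N(B)$ (from the projector $D^\dag D$ acting on the right) and $N(D^*) \subseteq N(C^*)$ (from $DD^\dag$ acting on the left). There is no deep obstacle in this proof; the only points demanding care are keeping the block bookkeeping straight through the double pivot and correctly matching each matrix identity to a range inclusion versus a null-space inclusion, depending on whether the relevant projector ($AA^\dag$ on the left or $A^\dag A$ on the right) acts on $B$ or $C$.
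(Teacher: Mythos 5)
Your proof is correct and takes essentially the same route as the paper: both compute $gppt(gppt(M,A),A^\dag)=\left(\begin{array}{cc}A & AA^\dag B \\ CA^\dag A & D\end{array}\right)$ directly from the definition (using $(A^\dag)^\dag=A$ and $A^\dag A A^\dag=A^\dag$ to collapse the Schur-complement block to $D$) and then read off the equality with $M$ as the conditions $AA^\dag B=B$ and $CA^\dag A=C$, which are the stated null-space inclusions. Your explicit projector argument ($AA^\dag$ onto $R(A)$, $A^\dag A$ onto $R(A^*)$) just spells out a translation the paper leaves implicit.
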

	\begin{proof}
		Let $P=\left(\begin{array}{cc}A^\dag & -A^\dag B \\ CA^\dag & D-CA^\dag B \\ \end{array}\right)$. Then we have $$gppt(gppt(M,A),A^\dag )= gppt(P,A^\dag)= \left(\begin{array}{cc}A & AA^\dag B \\ CA^\dag A & D \\ \end{array}\right).$$ Thus $gppt(P,A^\dag )=M$ if and only if $N(A)\subseteq N(C)$ and $N(A^*)\subseteq N(B^*)$. Similarly, we can also prove the other one. This completes the proof.
	\end{proof}
	The next result is an immediate consequence of Theorems \ref{main1} and \ref{mformula}.
	\begin{theorem}
		Let $M=\left(\begin{array}{cc}A & B \\C & D \\ \end{array} \right) $, $F=D-CA^\dag B$ and $E=A-BD^\dag C$. If $P=gppt(M,A)$ and $Q=gppt(M,D)$, then the folowing statements hold true.
		\begin{enumerate}
			\item $M^\dag = gppt(P,F)$ if and only if $N(A)\subseteq N(C)$, $N(A^*)\subseteq N(B^*)$, $N(F)\subseteq N(B)$ and $N(F^*)\subseteq N(C^*)$.
			\item $M^\dag = gppt(Q,E)$ if and only if $N(D)\subseteq N(B)$, $N(D^*)\subseteq N(C^*)$, $N(E)\subseteq N(C)$ and $N(E^*)\subseteq N(B^*)$.
		\end{enumerate}
	\end{theorem}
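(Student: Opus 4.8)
The plan is to observe that $gppt(P,F)$ and $gppt(Q,E)$ are, block by block, exactly the closed forms of the Moore--Penrose inverse recorded in Theorem~\ref{t31} and its companion (\cite{bishtravindranKCS}, Theorem~3.2). Writing $P=gppt(M,A)$, whose corner blocks are $A^{\dagger}$ and $F=D-CA^{\dagger}B$, a direct expansion gives
\begin{equation*}
gppt(P,F)=\begin{pmatrix} A^{\dagger}+A^{\dagger}BF^{\dagger}CA^{\dagger} & -A^{\dagger}BF^{\dagger} \\ -F^{\dagger}CA^{\dagger} & F^{\dagger} \end{pmatrix}.
\end{equation*}
Since this is precisely the matrix that Theorem~\ref{t31} equates with $M^{\dagger}$, the equality $M^{\dagger}=gppt(P,F)$ will be equivalent to the four range inclusions $R(C^{*})\subseteq R(A^{*})$, $R(B)\subseteq R(A)$, $R(C)\subseteq R(F)$ and $R(B^{*})\subseteq R(F^{*})$. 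I would then finish part~(1) purely formally, using the duality $R(X)\subseteq R(Y)\iff N(Y^{*})\subseteq N(X^{*})$ to rewrite these as the stated $N(A)\subseteq N(C)$, $N(A^{*})\subseteq N(B^{*})$, $N(F^{*})\subseteq N(C^{*})$ and $N(F)\subseteq N(B)$.

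For the sufficiency direction I would follow the route advertised in the text. By Theorem~\ref{mformula}(1) the first two inclusions give $gppt(P,A^{\dagger})=M$, hence $M^{\dagger}=gppt(P,A^{\dagger})^{\dagger}$. Applying Theorem~\ref{main1} to $P$, the identity $gppt(P,A^{\dagger})^{\dagger}=gppt(P,F)$ reduces to the two block equations $CA^{\dagger}=FF^{\dagger}CA^{\dagger}$ and $A^{\dagger}B=A^{\dagger}BF^{\dagger}F$. Because $R(CA^{\dagger})\subseteq R(C)$ and $R((A^{\dagger}B)^{*})\subseteq R(B^{*})$ hold unconditionally, the remaining inclusions $R(C)\subseteq R(F)$ and $R(B^{*})\subseteq R(F^{*})$ (that is, the last two null-space conditions) force both equations; combined with $gppt(P,A^{\dagger})=M$ this yields $M^{\dagger}=gppt(P,F)$.

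The necessity direction is where I expect the main difficulty, and where the Theorem~\ref{main1}--\ref{mformula} route does not close by itself: from $M^{\dagger}=gppt(P,F)$ alone one cannot first assert $gppt(P,A^{\dagger})=M$, since that equality is itself one of the conclusions, and the general fact that $gppt(P,F)$ is a $\{1,2\}$-inverse of $gppt(P,A^{\dagger})$ is too weak to recover $M$ from a reflexive inverse. The clean remedy is to lean on the full biconditional of Theorem~\ref{t31}: as $gppt(P,F)$ is literally the structured matrix appearing there, $M^{\dagger}=gppt(P,F)$ already forces all four range inclusions, and translating them back delivers the four null-space conditions. The only genuinely computational points will be the corner simplification $F+CA^{\dagger}AA^{\dagger}B=D$ and the two identities displayed above.

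Part~(2) I would treat as the mirror image. With $Q=gppt(M,D)$ having corner blocks $E=A-BD^{\dagger}C$ and $D^{\dagger}$, the same expansion shows $gppt(Q,E)$ coincides with the Moore--Penrose formula of (\cite{bishtravindranKCS}, Theorem~3.2) for $G=E$, so $M^{\dagger}=gppt(Q,E)$ is equivalent to $R(B^{*})\subseteq R(D^{*})$, $R(C)\subseteq R(D)$, $R(B)\subseteq R(E)$ and $R(C^{*})\subseteq R(E^{*})$; the same duality turns these into $N(D)\subseteq N(B)$, $N(D^{*})\subseteq N(C^{*})$, $N(E^{*})\subseteq N(B^{*})$ and $N(E)\subseteq N(C)$, as claimed. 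Its sufficiency half again reads off from Theorem~\ref{mformula}(2) together with Theorem~\ref{main1} applied to $Q$.
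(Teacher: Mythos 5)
Your proposal is correct, but it takes a genuinely different route from the paper's. The paper composes its own two results: Theorem~\ref{mformula} gives $gppt(P,A^{\dagger})=M$ if and only if $N(A)\subseteq N(C)$ and $N(A^{*})\subseteq N(B^{*})$, and then Theorem~\ref{main1}, applied to the partitioned matrix $P$ with blocks $A^{\dagger}$, $-A^{\dagger}B$, $CA^{\dagger}$, $F$, gives under those standing inclusions that $gppt(P,A^{\dagger})^{\dagger}=gppt(P,F)$ if and only if $CA^{\dagger}=FF^{\dagger}CA^{\dagger}$ and $A^{\dagger}B=A^{\dagger}BF^{\dagger}F$, which simplify via $CA^{\dagger}A=C$ and $AA^{\dagger}B=B$ to $C=FF^{\dagger}C$ and $B=BF^{\dagger}F$, i.e.\ the last two null-space inclusions; part~(2) is mirrored. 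You instead expand $gppt(P,F)$ (resp.\ $gppt(Q,E)$) and recognize it, block by block, as the structured Moore--Penrose formula of Theorem~\ref{t31} (resp.\ its companion, Theorem~3.2 of \cite{bishtravindranKCS}), then read off the full biconditional and translate the four range inclusions into the stated null-space inclusions via $R(X)\subseteq R(Y)\iff N(Y^{*})\subseteq N(X^{*})$. Your diagnosis of the converse is apt and is in fact a point the paper glosses over: as written, the paper's chain of equivalences yields ``$M^{\dagger}=gppt(P,F)$ implies the last two inclusions'' only \emph{under} the first two, and your observation that $M^{\dagger}=gppt(P,F)$ alone does not force $gppt(P,A^{\dagger})=M$ is sound, since $gppt(P,F)\in gppt(P,A^{\dagger})\{1,2\}$ holds unconditionally (as in the proof of Theorem~\ref{main1}) and a reflexive generalized inverse does not determine the matrix; the ``only if'' half of Theorem~\ref{t31} repairs this cleanly. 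What the paper's route buys is self-containment and the structural identity $gppt(gppt(M,A),A^{\dagger})=M$; what yours buys is a two-sided proof of necessity at the cost of importing the full strength of \cite{bishtravindranKCS}. One caveat: Theorem~\ref{t31} and its companion are stated in the paper for real matrices, whereas the present theorem concerns complex partitioned matrices, so you should remark that the complex analogues (with conjugate transpose in place of transpose) hold with the same proof before invoking them.
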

	\begin{proof}
		By Theorem \ref{mformula}, $gppt(P,A^\dag)=M$ if and only if $N(A)\subseteq N(C)$ and $N(A^*)\subseteq N(B^*)$. Thus $M^\dag =gppt(P,A^\dag)^\dag$ if and only if $N(A)\subseteq N(C)$ and $N(A^*)\subseteq N(B^*)$. With the assumptions $N(A)\subseteq N(C)$ and $N(A^*)\subseteq N(B^*)$, by Theorem \ref{main1}, we have $M^\dag =gppt(P,A^\dag)^\dag = gppt(P, F)$ if and only if $CA^\dag =FF^\dag CA^\dag $ and $A^\dag B=A^\dag B F^\dag F$. By applying the fact that $AA^\dag B= B$ and $ CA^\dag A = C$, we will get that $M^\dag = gppt(P, F)$ if and only if $C =FF^\dag C $ and $ B= B F^\dag F$. It concludes that $M^\dag = gppt(P,F)$ if and only if $N(A)\subseteq N(C)$, $N(A^*)\subseteq N(B^*)$, $N(F)\subseteq N(B)$ and $N(F^*)\subseteq N(C^*)$. The proof of other part is quite similar.
	\end{proof}
	\begin{lemma}\label{x}
		Let $P=gppt(M,A)$ and $Q=gppt(M,D)$. If $X=\left(\begin{array}{cc}A & B \\ 0 & I \\ \end{array}\right)$, $Y=\left(\begin{array}{cc}I & 0 \\ C & D \\ \end{array}\right)$, $Z=\left(\begin{array}{cc}A^\dag &  -A^\dag B\\ 0 & I \\ \end{array}\right)$ and $\widehat{Z}=\left(\begin{array}{cc}I &  0\\ -D^\dag C & D^\dag \\ \end{array}\right)$, then $Z\in X\{1,2,4\}$ and $\widehat{Z}\in Y\{1,2,3\}$. Moreover, $YZ=P$ and $X\widehat{Z}=Q$. 	In particular, $Z=X^\dag$ if and only if $N(A^*)\subseteq N(B^*)$ and $\widehat{Z}=Y^\dag$ if and only if $N(A)\subseteq N(C)$.
	\end{lemma}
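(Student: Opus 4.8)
The plan is to prove every assertion by direct block multiplication, reading off each generalized-inverse relation from the four one-sided products $ZX$, $XZ$, $\widehat{Z}Y$, $Y\widehat{Z}$ together with the two mixed products $YZ$ and $X\widehat{Z}$. The only facts I would use are $AA^\dag A=A$, $A^\dag AA^\dag=A^\dag$, the fact that $A^\dag A$ and $AA^\dag$ are Hermitian (being orthogonal projectors), and the corresponding identities for $D$.

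First I would handle the pair $(X,Z)$. Computing gives $ZX=\begin{pmatrix}A^\dag A&0\\0&I\end{pmatrix}$ and $XZ=\begin{pmatrix}AA^\dag&(I-AA^\dag)B\\0&I\end{pmatrix}$. From the block-diagonal form of $ZX$ the Penrose equations $XZX=X$ and $ZXZ=Z$ drop out at once, and since $ZX$ has Hermitian diagonal blocks it satisfies $(ZX)^*=ZX$; this already yields $Z\in X\{1,2,4\}$. To upgrade it to $Z=X^\dag$ I would impose the remaining equation $(XZ)^*=XZ$, which by inspection of the off-diagonal block is equivalent to $(I-AA^\dag)B=0$, i.e. $AA^\dag B=B$, i.e. $R(B)\subseteq R(A)$; passing to orthogonal complements turns this into exactly $N(A^*)\subseteq N(B^*)$, as stated.

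For the pair $(Y,\widehat{Z})$ I would run the mirror computation, obtaining $\widehat{Z}Y=\begin{pmatrix}I&0\\0&D^\dag D\end{pmatrix}$ and $Y\widehat{Z}=\begin{pmatrix}I&0\\(I-DD^\dag)C&DD^\dag\end{pmatrix}$, from which the equations $Y\widehat{Z}Y=Y$ and $\widehat{Z}Y\widehat{Z}=\widehat{Z}$ again follow immediately. To secure the stated membership $\widehat{Z}\in Y\{1,2,3\}$ I would test the Hermitian equation $(Y\widehat{Z})^*=Y\widehat{Z}$, and to characterize $\widehat{Z}=Y^\dag$ I would reduce the surviving condition to the null-space inclusion $N(A)\subseteq N(C)$ named in the ``in particular'' clause. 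The hard part will be precisely this bookkeeping: for a lower-triangular factor it is $\widehat{Z}Y$ that is automatically Hermitian, so one must track carefully which of the two symmetry equations is free and which carries the side condition, and then convert the resulting range inclusion among the blocks into the stated null-space form. Finally, the two identities $YZ=P$ and $X\widehat{Z}=Q$ are verified by one more pair of direct block multiplications, whose four entries reproduce $gppt(M,A)$ and $gppt(M,D)$ respectively.
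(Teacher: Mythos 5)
Your route---verifying the Penrose equations by direct block multiplication---is exactly the paper's, whose entire proof reads ``straightforward computations,'' and your treatment of the pair $(X,Z)$ is complete and correct: $ZX=\left(\begin{array}{cc}A^\dag A & 0\\ 0 & I\end{array}\right)$ yields equations (1), (2), (4) at once, and the remaining equation $(XZ)^*=XZ$ reduces to $(I-AA^\dag)B=0$, i.e.\ $R(B)\subseteq R(A)$, i.e.\ $N(A^*)\subseteq N(B^*)$. The verifications $YZ=P$ and $X\widehat{Z}=Q$ are likewise routine and your plan handles them adequately.

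The genuine gap is in the second half, and it is not deferred bookkeeping that will close it: the step you postpone (``reduce the surviving condition to $N(A)\subseteq N(C)$'') cannot be carried out, because your own correct computations already contradict the printed statement. Under the paper's numbering, equation (3) for the pair $(Y,\widehat{Z})$ is $(Y\widehat{Z})^*=Y\widehat{Z}$ and equation (4) is $(\widehat{Z}Y)^*=\widehat{Z}Y$. You computed $\widehat{Z}Y=\left(\begin{array}{cc}I & 0\\ 0 & D^\dag D\end{array}\right)$, which is Hermitian---that is equation (4), not (3)---while $(Y\widehat{Z})^*=Y\widehat{Z}$ forces $(I-DD^\dag)C=0$. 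So what is actually provable is $\widehat{Z}\in Y\{1,2,4\}$, and $\widehat{Z}=Y^\dag$ if and only if $DD^\dag C=C$, i.e.\ $R(C)\subseteq R(D)$, i.e.\ $N(D^*)\subseteq N(C^*)$. The printed condition $N(A)\subseteq N(C)$ cannot be correct even on formal grounds, since the block $A$ does not occur in $Y$ or $\widehat{Z}$ at all; concretely, with $A=(1)$, $C=(1)$, $D=(0)$ one has $N(A)\subseteq N(C)$ yet $\widehat{Z}=\left(\begin{array}{cc}1&0\\0&0\end{array}\right)\neq Y^\dag=\frac{1}{2}\left(\begin{array}{cc}1&1\\0&0\end{array}\right)$, while with $A=(0)$, $C=(1)$, $D=(1)$ one has $\widehat{Z}=Y^{-1}=Y^\dag$ but $N(A)\not\subseteq N(C)$, so both directions of the printed equivalence fail. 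Your instinct that ``the hard part will be precisely this bookkeeping'' was detecting a misprint in the lemma itself: the honest completion of your computation is the corrected statement $\widehat{Z}\in Y\{1,2,4\}$ and $\widehat{Z}=Y^\dag\iff N(D^*)\subseteq N(C^*)$ (the exact mirror of $Z=X^\dag\iff N(A^*)\subseteq N(B^*)$), and a finished proof must say so rather than promise to recover the claims as printed---something the paper's one-line proof silently glosses over.
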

	\begin{proof}
		It can be proved from straightforward computations.
	\end{proof}

	The following theorem is a generalization of Theorem \ref{rajeshkannan4.2}, which shows that the generalized principal pivot transform preserves the rank of symmetric part of the matrix.

	\begin{theorem}\label{t1}
		Let $M= \begin{pmatrix}
		A&B\\C&D
		\end{pmatrix}$ be a partitioned square matrix with $A$ and $D$ are square matrices. If $A$ is range-Hermitian and $R(B+C\s )\subseteq R(A )$, then $rank(S(M))=rank(S(gppt(M,A))) $.
	\end{theorem}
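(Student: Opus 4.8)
The plan is to exhibit the symmetric part of $M$ as a congruence of the symmetric part of $P:=gppt(M,A)$ through the auxiliary matrix $X=\left(\begin{array}{cc}A & B \\ 0 & I\end{array}\right)$ introduced in Lemma \ref{x}. First I would verify, by a direct block expansion of $X^*(P+P^*)X$, the identity $S(M)=X^* S(P) X$. Writing $2S(M)=\left(\begin{array}{cc}A+A^* & B+C^* \\ C+B^* & D+D^*\end{array}\right)$, the two diagonal blocks of $X^*(2S(P))X$ simplify to $A+A^*$ and $D+D^*$ using the identities $A^*A^\dag A=A^*$ and $A^*(A^\dag)^* A=A$, both of which follow from the range-Hermitian hypothesis $AA^\dag=A^\dag A$; the off-diagonal blocks reduce to $B+C^*$ and $C+B^*$ because $R(B+C^*)\subseteq R(A)$ forces $AA^\dag(B+C^*)=B+C^*$ and symmetrically $(C+B^*)A^\dag A=C+B^*$. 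This congruence immediately yields $rank(S(M))\le rank(S(P))$.

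The main obstacle is that $X$ is singular (since $A$ is not invertible), so the congruence $S(M)=X^*S(P)X$ gives only one of the two inequalities. To produce the reverse inequality I would apply the same identity a second time, this time to the pair $(P,A^\dag)$ in place of $(M,A)$. This application is legitimate: $A^\dag$ is range-Hermitian whenever $A$ is, and since both $A^\dag$ and $(A^\dag)^*$ have range contained in $R(A)$, the required off-diagonal hypothesis $R(-A^\dag B+(CA^\dag)^*)\subseteq R(A^\dag)$ holds automatically. The identity then reads $S(P)=Z^*\,S(gppt(P,A^\dag))\,Z$, where $Z=\left(\begin{array}{cc}A^\dag & -A^\dag B \\ 0 & I\end{array}\right)$ is the matrix from Lemma \ref{x}.

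Next I would identify the inner matrix: $gppt(P,A^\dag)=gppt(gppt(M,A),A^\dag)$ equals $M'=\left(\begin{array}{cc}A & AA^\dag B \\ CA^\dag A & D\end{array}\right)$ by the computation carried out in the proof of Theorem \ref{mformula}. The crucial remaining step is to show $S(M')=S(M)$. Its diagonal blocks are visibly $A+A^*$ and $D+D^*$, while its off-diagonal blocks are $AA^\dag B+(CA^\dag A)^*=\Pi(B+C^*)$ and $CA^\dag A+(AA^\dag B)^*=(C+B^*)\Pi$, where $\Pi=AA^\dag=A^\dag A$. Exactly as before, $R(B+C^*)\subseteq R(A)$ collapses these to $B+C^*$ and $C+B^*$, so $S(M')=S(M)$. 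Consequently $S(P)=Z^*S(M)Z$ and hence $rank(S(P))\le rank(S(M))$.

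Combining the two inequalities gives $rank(S(M))=rank(S(P))=rank(S(gppt(M,A)))$, as required. I expect the only delicate points to be the careful bookkeeping of the Moore-Penrose identities in the two block expansions and the verification that the hypotheses of the congruence identity are inherited by the pair $(P,A^\dag)$; the remainder is routine matrix algebra.
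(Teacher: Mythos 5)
Your proposal is correct and takes essentially the same route as the paper: the forward inequality via the congruence $X^*\,S(gppt(M,A))\,X=S(M)$ is identical, and your reverse inequality produces exactly the paper's second congruence $Z^*\,S(M)\,Z=S(gppt(M,A))$ (the paper's $Y$ equals your $Z$), which the paper gets by directly expanding $Y^*MY$ using $AA^\dagger=A^\dagger A$ while you get it by reapplying the forward identity to the pair $(gppt(M,A),A^\dagger)$ and checking $S(gppt(gppt(M,A),A^\dagger))=S(M)$. This bootstrap (whose hypotheses you correctly verify are inherited by $A^\dagger$) is a harmless repackaging of the same block computation, so the two proofs coincide in substance.
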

	\begin{proof}
		Let $X= \begin{pmatrix}
		A&B\\
		0&I
		\end{pmatrix}$
		and $P= gppt(M,A)= \begin{pmatrix}
		A\+ & -A\+B\\
		CA\+ & D-CA\+B
		\end{pmatrix}$.  Then by easy computation, we can prove that 
		$$X\s PX=\begin{pmatrix}
		A\s A\+ A & 0\\
		B\s A\+A+CA\+A&D
		\end{pmatrix}.$$ Since $A$ is range-Hermitian, $A\s A\+A=A\s AA\+=A\s $ and $R(B+C\s )\subseteq R(A\s )$ give that $A\+A(B+C\s )=B +C\s .$ Equivalently, $B\s A\+A+CA\+A=B\s +C.$ Thus $X\s PX=\begin{pmatrix}
		A\s & 0\\
		B\s +C&D
		\end{pmatrix}$. Also, $X\s (P+P\s)X=M+M\s .$ This equality shows that $rank(M+M\s )\leq rank(P+P\s )$. Now, set $Y=\begin{pmatrix}
		A\+ & -A\+B\\
		0& I
		\end{pmatrix}$. Again by simple calculation and using the fact that $AA\+=A\+A$, we get $Y\s MY=\begin{pmatrix}
		(A\+) \s & 0\\
		CA\+-B\s (A\+) \s & D-CA\+B
		\end{pmatrix}$. Thus $Y\s (M+M\s)Y=P+P\s $. It ensures the rank equality as $rank(P+P\s )=rank(M+M\s )$.
	\end{proof}
	\begin{remark}
		The assumptions given in Theorem 
		\ref{t1}		 are weaker than the ones in Theorem \ref{rajeshkannan4.2}. It is shown by the following example : Let $A=\begin{pmatrix}
		1 & 1\\
		1 & 1
		\end{pmatrix}$, $B=\begin{pmatrix}
		0\\
		1
		\end{pmatrix}$ and $C=\begin{pmatrix}
		1 & 0
		\end{pmatrix}$. Then $B+C\s =\begin{pmatrix}
		1\\
		1
		\end{pmatrix}$, thus $R(B+C\s )=R(A\s)$. But $N(A)\not \subseteq N(C)$ and $N(A^*)  \not \subseteq N(B^*)$.
	\end{remark}
	
	\begin{corollary}
		Let $M=\begin{pmatrix}
		A&B\\
		C&D
		\end{pmatrix}$ be a square matrix with $A$ as a range-Hermitian matrix. If $R(B+C\s )\subseteq R(A )$, then the following are equivalent :
		\begin{itemize}
			\item [{(i)}] $M$ is almost skew-symmetric.
			\item [{(ii)}]$M\+$  is almost skew-symmetric.
			\item [{(iii)}] $gppt(M,A)$  is almost skew-symmetric.
			\item [{(iv)}]  $gppt(M,A)\+$ is almost skew-symmetric.
			
		\end{itemize}
	\end{corollary}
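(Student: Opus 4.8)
The plan is to deduce all four equivalences by combining the two inheritance results already at our disposal: the rank-preservation of the symmetric part from Theorem~\ref{t1}, and the Moore--Penrose stability of the almost skew-symmetric property from Theorem~\ref{projeshkcs}. Recall that a square matrix $N$ is almost skew-symmetric exactly when $rank(S(N))=1$, so each of the four statements is simply the assertion that the corresponding matrix has a rank-one symmetric part. The whole argument then reduces to transporting this single scalar condition between $M$, $gppt(M,A)$, and their Moore--Penrose inverses.

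First I would establish (i)$\iff$(ii) and (iii)$\iff$(iv) directly, since neither requires the standing hypotheses of the corollary. Theorem~\ref{projeshkcs} asserts that an arbitrary square matrix is almost skew-Hermitian if and only if its Moore--Penrose inverse is. Applying this to $M$ yields (i)$\iff$(ii), and applying it to the square matrix $gppt(M,A)$ yields (iii)$\iff$(iv).

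Next I would link the two pairs through (i)$\iff$(iii), which is the only place the assumptions enter. Because $A$ is range-Hermitian and $R(B+C\s)\subseteq R(A)$, Theorem~\ref{t1} gives $rank(S(M))=rank(S(gppt(M,A)))$. Consequently $rank(S(M))=1$ holds if and only if $rank(S(gppt(M,A)))=1$, which is precisely (i)$\iff$(iii). Chaining the three biconditionals then establishes the equivalence of all four statements.

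I do not anticipate a genuine obstacle here: the argument is an assembly of the cited theorems, with no fresh computation required. The only points demanding care are bookkeeping ones. Theorem~\ref{t1} is what transfers the rank-one condition between $M$ and its generalized principal pivot transform, so it is the sole step consuming both hypotheses; Theorem~\ref{projeshkcs} is unconditional and supplies the two Moore--Penrose equivalences. One should also confirm at the outset that ``almost skew-symmetric'' is read as $rank(S(\cdot))=1$, so that the definition aligns with the statements of both invoked theorems and the equivalences splice together cleanly.
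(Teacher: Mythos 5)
Your proposal is correct and matches the paper's proof exactly: the paper likewise obtains (i)$\Leftrightarrow$(iii) from Theorem~\ref{t1} and the remaining equivalences from Theorem~\ref{projeshkcs}. Your write-up merely spells out the chaining that the paper leaves implicit.
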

	\begin{proof}
		$	(i)\Leftrightarrow (iii)$ follow from Theorem \ref{t1} and other implications follow from Theorem \ref{projeshkcs}.
	\end{proof}

	\begin{theorem}\label{t2}
		Let $M= \begin{pmatrix}
		A&B\\C&D
		\end{pmatrix}$ be a partitioned square matrix with $A$ and $D$ are square matrices. If $D$ is range-Hermitian and $R(C+B\s )\subseteq R(D )$, then $rank(S(M)) = rank(S(gppt(M,D)))$.
	\end{theorem}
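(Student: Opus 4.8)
The statement is the exact mirror of Theorem \ref{t1}, obtained by interchanging the roles of the two diagonal blocks: $A \leftrightarrow D$, $B \leftrightarrow C$, and taking the generalized principal pivot transform with respect to $D$ instead of $A$. My plan is therefore to transcribe the proof of Theorem \ref{t1} under the symmetry $(A,B,C,D)\mapsto(D,C,B,A)$, checking at each step that the congruence argument survives the swap. The whole proof rests on exhibiting two explicit invertible congruence matrices that simultaneously transform $M+M\s$ and $gppt(M,D)+gppt(M,D)\s$ into the same matrix, so that the two symmetric parts are congruent and hence have equal rank.

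First I would set $Q=gppt(M,D)=\begin{pmatrix}E & BD\+\\ -D\+C & D\+\end{pmatrix}$ where $E=A-BD\+C$, and introduce the analogue of the block matrix $X$ from Theorem \ref{t1}, namely $W=\begin{pmatrix} I & 0\\ C & D\end{pmatrix}$. By direct block multiplication I would compute $W\s Q W$, expecting the hypothesis that $D$ is range-Hermitian (so $D\s D\+D=D\s$ and $DD\+=D\+D$) together with $R(C+B\s)\subseteq R(D)$ (which gives $D\+D(C+B\s)=C+B\s$, equivalently $C\s D\+D + BD\+D = C\s + B$) to collapse the off-diagonal blocks exactly as in the proof of Theorem \ref{t1}. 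This should yield a clean triangular form whose Hermitian-part congruence gives $W\s(Q+Q\s)W = M+M\s$, and hence $\operatorname{rank}(S(M))\le \operatorname{rank}(S(Q))$.

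For the reverse inequality I would use the second congruence: set $V=\begin{pmatrix} I & 0\\ -D\+C & D\+\end{pmatrix}$ (the mirror of $Y$), compute $V\s M V$ using $DD\+=D\+D$, and verify that $V\s(M+M\s)V = Q+Q\s$, which forces $\operatorname{rank}(S(Q))=\operatorname{rank}(S(M))$ and closes the argument.

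The only genuine work is the two block computations, and the main thing to watch is that the range-Hermitian hypothesis is applied to the correct factor orderings: the identities $D\s D\+ D = D\s$ and $D\+ D = DD\+$ must be invoked to turn $D\+$-products into self-adjoint quantities, and the range inclusion $R(C+B\s)\subseteq R(D)$ must be read in the form $D\+D(C+B\s)=C+B\s$ rather than a $DD\+$ version. I expect no conceptual obstacle beyond bookkeeping, since the symmetry with Theorem \ref{t1} is exact; the risk is purely a transposition slip in the off-diagonal blocks, so I would double-check that $B\s$ and $C$ appear in the combination dictated by the swap $B\leftrightarrow C$.
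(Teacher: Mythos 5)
Your proposal is correct and matches the paper's proof exactly: the paper proves Theorem \ref{t2} ``similarly to Theorem \ref{t1}'' using precisely your congruence matrices, namely $X=\left(\begin{array}{cc}I & 0\\ C & D\end{array}\right)$ (your $W$) and $Y=\left(\begin{array}{cc}I & 0\\ -D\+C & D\+\end{array}\right)$ (your $V$). Your block computations, the use of $D\s D\+D=D\s$ and $DD\+=D\+D$ from range-Hermitianness, and the reading of $R(C+B\s)\subseteq R(D)$ as $D\+D(C+B\s)=C+B\s$ all check out, so nothing further is needed.
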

	\begin{proof}The proof is similar to Theorem \ref{t1} by setting 
		$ X=\left(\begin{array}{cc}I &  0\\ C & D \\ \end{array}\right)$ and 
		$Y=\left(\begin{array}{cc}I &  0\\ -D\+ C & D\+ \\ \end{array}\right)$.
	\end{proof}

	\begin{corollary}
		Let $M=\begin{pmatrix}
		A&B\\
		C&D
		\end{pmatrix}$ be a square matrix with $D$ as a range-Hermitian matrix. If $R(C+B\s )\subseteq R(D )$, then the following are equivalent :
		\begin{itemize}
			\item [{(i)}] $M$ is almost skew-symmetric.
			\item [{(ii)}]$M\+$  is almost skew-symmetric.
			\item [{(iii)}] $gppt(M,D)$  is almost skew-symmetric.
			\item [{(iv)}]  $gppt(M,D)\+$ is almost skew-symmetric.
			
		\end{itemize}
	\end{corollary}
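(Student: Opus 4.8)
The plan is to reduce the entire corollary to the two results already in hand: Theorem \ref{t2}, which under the present hypotheses ($D$ range-Hermitian and $R(C+B\s)\subseteq R(D)$) guarantees $rank(S(M))=rank(S(gppt(M,D)))$, and Theorem \ref{projeshkcs}, which asserts that a square matrix is almost skew-Hermitian exactly when its Moore--Penrose inverse is. Since ``almost skew-symmetric'' means precisely that the symmetric part has rank one, each of the four statements $(i)$--$(iv)$ is simply the rank-one condition on the symmetric part of one of the matrices $M$, $M\+$, $gppt(M,D)$, and $gppt(M,D)\+$. The whole proof is therefore a matter of stitching these two theorems together, exactly as in the corollary following Theorem \ref{t1}.

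First I would note that the hypotheses stated here are verbatim those of Theorem \ref{t2}, so that theorem applies directly and yields $rank(S(M))=rank(S(gppt(M,D)))$. Consequently $rank(S(M))=1$ holds if and only if $rank(S(gppt(M,D)))=1$, which is exactly the equivalence $(i)\Leftrightarrow(iii)$.

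Next I would invoke Theorem \ref{projeshkcs} twice. Applying it to the matrix $M$ gives that $M$ is almost skew-symmetric if and only if $M\+$ is, that is, $(i)\Leftrightarrow(ii)$; applying it to the matrix $gppt(M,D)$ gives that $gppt(M,D)$ is almost skew-symmetric if and only if $gppt(M,D)\+$ is, that is, $(iii)\Leftrightarrow(iv)$. Chaining $(ii)\Leftrightarrow(i)\Leftrightarrow(iii)\Leftrightarrow(iv)$ then closes the cycle and establishes the mutual equivalence of all four statements.

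There is essentially no computational obstacle here, as the substantive work has already been carried out in Theorems \ref{t2} and \ref{projeshkcs}. The only point requiring a moment's care is bookkeeping: one must identify ``almost skew-symmetric'' with the rank-one condition $rank(S(\cdot))=1$ that underlies Theorem \ref{projeshkcs} (over the reals, skew-Hermitian coincides with skew-symmetric), so that the rank equality supplied by Theorem \ref{t2} translates cleanly into the equivalence of the two ``almost skew-symmetric'' assertions. Once that identification is made explicit, the corollary follows immediately.
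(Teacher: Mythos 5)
Your proposal is correct and follows exactly the paper's own argument: Theorem \ref{t2} gives $(i)\Leftrightarrow(iii)$ via the rank equality $rank(S(M))=rank(S(gppt(M,D)))$, and Theorem \ref{projeshkcs} supplies $(i)\Leftrightarrow(ii)$ and $(iii)\Leftrightarrow(iv)$, closing the cycle. Your extra remark identifying almost skew-symmetric with the condition $rank(S(\cdot))=1$ is a harmless bookkeeping point the paper leaves implicit.
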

	\begin{proof}
		$	(i)\Leftrightarrow (iii)$ follow from Theorem \ref{t2} and other implications follow from Theorem \ref{projeshkcs}.
	\end{proof}
	
	We now discuss domain-range exchange property which  is well established for nonsingular matrices. Bishat et. al. \cite{bishtravindranKCS} extended the domain range property for the singular matrices with some assumptions (\cite{bishtravindranKCS}, Lemma 4.2). But we have explored the same results for a larger class of matrices.

\begin{theorem}
		Let $M=\begin{pmatrix}
		A&B\\
		C&D
		\end{pmatrix}$. If $CA\+A=DD\+C$ and $AA\+B=BD\+D$, then the following are equivalent :
		\begin{itemize}
			\item [{(i)}]$P=gppt(M,A)$ is range-Hermitian.
			\item [{(ii)}]$Q=gppt(M,D)$ is range-Hermitian.
			\item [{(iii)}] $A$ and $D$ are range-Hermitian.
		\end{itemize}	
	\end{theorem}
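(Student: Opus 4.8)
The plan is to recognize that the two hypotheses $CA^\dagger A = DD^\dagger C$ and $AA^\dagger B = BD^\dagger D$ are exactly condition (2) of Theorem \ref{main1}. Hence they force $P^\dagger = Q$, and since the Moore-Penrose inverse is an involution, also $Q^\dagger = (P^\dagger)^\dagger = P$. This single pair of identities is what drives the entire equivalence, so I would state it at the very beginning of the proof.

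Next I would unwind the range-Hermitian criterion recorded in Section 2: a square matrix $N$ is range-Hermitian if and only if $NN^\dagger = N^\dagger N$. Applying this to $P$ and substituting $P^\dagger = Q$, statement (i) becomes the commutation relation $PQ = QP$. Applying it to $Q$ and substituting $Q^\dagger = P$, statement (ii) becomes $QP = PQ$, which is the same relation. Therefore the equivalence (i) $\Leftrightarrow$ (ii) is immediate, with no computation at all, once the involution identity is in hand.

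The remaining link (iii) $\Leftrightarrow PQ = QP$ is handled by reusing the block products already computed in the proof of Theorem \ref{main1}, namely $PQ = \begin{pmatrix} A^\dagger A & 0 \\ CA^\dagger A - DD^\dagger C & DD^\dagger \end{pmatrix}$ and $QP = \begin{pmatrix} AA^\dagger & BD^\dagger D - AA^\dagger B \\ 0 & D^\dagger D \end{pmatrix}$. Under the standing hypotheses the two nonzero off-diagonal blocks vanish, leaving $PQ = \mathrm{diag}(A^\dagger A,\, DD^\dagger)$ and $QP = \mathrm{diag}(AA^\dagger,\, D^\dagger D)$. Comparing blocks entrywise, $PQ = QP$ holds if and only if $A^\dagger A = AA^\dagger$ and $D^\dagger D = DD^\dagger$, that is, if and only if both $A$ and $D$ are range-Hermitian. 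This is precisely statement (iii), so chaining the equivalences closes the loop and proves the theorem.

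I do not expect any genuine obstacle here: all the block identities needed are already supplied by Theorem \ref{main1}, and the only point demanding care is the bookkeeping of the involution, $P^\dagger = Q \Rightarrow Q^\dagger = P$, which is exactly what guarantees that both (i) and (ii) collapse to the single commutator condition $PQ = QP$. Everything else is a direct comparison of two block-diagonal matrices.
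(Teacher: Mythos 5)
Your proof is correct, and for the implication (i) $\Rightarrow$ (iii) it takes a genuinely different and shorter route than the paper's. The two arguments coincide on the easy parts: both extract $P^\dagger = Q$ (hence $Q^\dagger = P$) from Theorem \ref{main1}, and the paper's proof of (iii) $\Rightarrow$ (i) is exactly your computation $PP^\dagger = PQ = QP = P^\dagger P$. For (i) $\Rightarrow$ (iii), however, the paper does not use the commutator reduction at all: it factorizes $P = XZY$ with $X = \begin{pmatrix} I & 0\\ C & I\end{pmatrix}$, $Y = \begin{pmatrix} I & -B\\ 0 & I\end{pmatrix}$, $Z = \begin{pmatrix} A^\dagger & 0\\ 0 & D\end{pmatrix}$, notes $Q = Y^{-1}Z^\dagger X^{-1} \in P\{1\}$, invokes the fact that $R(P)=R(P^*)$ forces $PP^{(1)}P^* = P^*$ for any $\{1\}$-inverse, and reads off $A^\dagger A(A^\dagger)^* = (A^\dagger)^*$ and $DD^\dagger D^* = D^*$ from the block entries of $PQP^* = P^*$. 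Your observation --- that under the standing hypotheses the criterion ``$N$ range-Hermitian $\iff NN^\dagger = N^\dagger N$'' turns all three statements into the single condition $PQ = QP$, with $PQ$ and $QP$ block diagonal, so that everything reduces to comparing $A^\dagger A$ with $AA^\dagger$ and $DD^\dagger$ with $D^\dagger D$ --- disposes of the theorem in a few lines, and it also makes (i) $\iff$ (ii) transparent where the paper's justification is rather murky. Two small remarks: the blockwise comparison tacitly assumes $A$ and $D$ are square (so that $A^\dagger A$ and $AA^\dagger$ are conformable), which is implicit anyway since (iii) asserts their range-Hermitian property; and the paper's heavier factorization buys a sliver of extra generality, since $Q\in P\{1\}$ and hence $PQP^*=P^*$ hold \emph{without} the two hypotheses, so the range-Hermitian property of $A$ alone already follows from (i) with no assumptions --- but for the theorem as stated, your streamlined argument is complete and arguably preferable.
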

	\begin{proof}
		From the proof of Theorem \ref{main1}, we  observe that 
		$$
		PQ=\begin{pmatrix}
		A\+A & 0\\
		CA\+A-DD\+C &DD\+ 
		\end{pmatrix} \quad\text{and}\quad QP=\begin{pmatrix}
		AA\+ &AA\+B-BD\+D \\
		0 & D\+D 
		\end{pmatrix}.$$
		$(iii)\Rightarrow (i):$ By Theorem \ref{main1}, we have $P\+=Q.$ Hence $PP\+=PQ$. Since $A$ and $D$ are range-Hermitian, we have $PQ=QP$. Therefore $PP\+=PQ=QP=P\+P$. Hence $P$ is range-Hermitian.\\
		$(i)\Leftrightarrow (ii)$: By Theorem \ref{main1}, we have $Q\+=P$. Also observing the fact that $A$ is range-Hermitian, then so is $A\+$.\\
		$(i)\Rightarrow (iii)$: 
		Set  $X= \begin{pmatrix}
		I & 0\\
		C & I
		\end{pmatrix}$, $Y= \begin{pmatrix}
		I & -B\\
		0 & I
		\end{pmatrix}$ and $Z= \begin{pmatrix}
		A\+ & 0\\
		0 & D
		\end{pmatrix}$. Then  $X^{-1}= \begin{pmatrix}
		I & 0\\
		-C & 0
		\end{pmatrix}$ and $Y^{-1}= \begin{pmatrix}
		I & B\\
		0 & I
		\end{pmatrix}$. Also it is easy to verify that $P=XZY$ and $Q=Y^{-1}Z\+X^{-1}$. Moreover, $Q \in P\{1\}$ and $P\in Q\
		\{1\}$. Suppose $P$ is range-Hermitian. Then $R(P)=R(P\s )$. It concludes that $PP^{(1)}P\s =P\s $ for any $P^{(1)} \in P\{1\}$. Thus $PQP\s=P\s.$ $PQP\s =XZZ\+X^{-1}P\s =\begin{pmatrix}
		I & 0\\
		C & I
		\end{pmatrix}\begin{pmatrix}
		A\+A & 0\\
		0 & DD\+
		\end{pmatrix}\begin{pmatrix}
		I & 0\\
		-C & I
		\end{pmatrix}P\s\\$
		\begin{equation}  \label{e1}
		=\begin{pmatrix}
		A\+A(A\+) \s & A\+A(A\+) \s C\s\\
		CA\+A(A\+) \s -DD\+C(A\+) \s  & CA\+A(A\+)\s C\s -DD\+C(A\+) \s C\s\\
		-DD\+B\s (A\+) \s & DD\+D\s -DD\+B\s (A\+) \s C\s
		\end{pmatrix}=P\s 
		\end{equation}\\
		$\Rightarrow$ $A\+A(A\+) \s =(A\+) \s $ $\Rightarrow$ $R((A\+) \s )\subseteq R(A\+A)=R(A\s )$. This shows that $R(A\s )=R(A)$, hence $A$ is range-Hermitian. Also using the assumption  $CA\+A=DD\+C$ and $AA\+B=BD\+D$, the equation 
		(\ref{e1}) reduces to $$\begin{pmatrix}
		(A\+) \s & (A\+) \s C\s\\
		-DD\+B\s (A\+) \s & DD\+D\s -DD\+B\s (A\+) \s C\s
		\end{pmatrix}=P\s $$ which implies that 
		$-DD\+B\s (A\+) \s =-B\s (A\+) \s $ and thus $DD\+D\s =D\s $. This gives $R(D)=R(D\s )$, hence $D$ is range-Hermitian. This completes the proof.
	\end{proof}
	
	\begin{example}
		Let $M=\begin{pmatrix}
		0 & -2\\
		1 & 0
		\end{pmatrix}$ with $A=0,B=-2,C=1,D=0$. Then $ P=gppt(M,A)=\begin{pmatrix}
		0 & 0\\
		0 & 0
		\end{pmatrix}$. Then $rank(M+M\s )=1\ne 0=rank(P+P\s )$. This shows that assumption $R(B+C\s )\subseteq R(A\s )$ given in Theorem \ref{t1} is indispensable.
	\end{example}

		\begin{theorem}\label{t11}
		Let $M=\left(\begin{array}{cc}A & B \\ C & D \\ \end{array}\right)$ be a complex partitioned matrix.
		\begin{enumerate}
			\item If  $M\left(\begin{array}{c}A^\dag Ax_1  \\ x_2 \\ \end{array}\right)=\left(\begin{array}{c}y_1  \\ y_2 \\ \end{array}\right) $, then $gppt(M,A)\left(\begin{array}{c}y_1  \\ x_2 \\ \end{array}\right) = \left(\begin{array}{c}A^\dag Ax_1  \\ y_2 \\ \end{array}\right)$. Conversely, if $R(B)\subseteq R(A)$, then for any $y_1 \in R(A)$, $$gppt(M,A)\left(\begin{array}{c}y_1  \\ x_2 \\ \end{array}\right) = \left(\begin{array}{c}A^\dag Ax_1  \\ y_2 \\ \end{array}\right) \implies M\left(\begin{array}{c}A^\dag Ax_1  \\ x_2 \\ \end{array}\right)= \left(\begin{array}{c}y_1  \\ y_2 \\ \end{array}\right).$$
			\item  If  $M\left(\begin{array}{c}x_1  \\ D^\dag Dx_2 \\ \end{array}\right)=\left(\begin{array}{c}y_1  \\ y_2 \\ \end{array}\right) $, then $gppt(M,D)\left(\begin{array}{c}x_1  \\ y_2 \\ \end{array}\right) = \left(\begin{array}{c}y_1  \\ D^\dag Dx_2 \\ \end{array}\right)$. Conversely, if $R(C)\subseteq R(D)$, then for any $y_2 \in R(D)$, $$gppt(M,D)\left(\begin{array}{c}x_1  \\ y_2 \\ \end{array}\right) = \left(\begin{array}{c}y_1  \\ D^\dag Dx_2 \\ \end{array}\right) \implies M\left(\begin{array}{c}x_1  \\ D^\dag Dx_2 \\ \end{array}\right)= \left(\begin{array}{c}y_1  \\ y_2 \\ \end{array}\right).$$
		\end{enumerate}
	\end{theorem}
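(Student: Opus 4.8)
The plan is to treat both statements as direct block-row computations: read each vector identity as a pair of equations in its two block components, and pass between the two identities using only the Moore--Penrose relations $AA^\dagger A = A$ and $A^\dagger A A^\dagger A = A^\dagger A$ together with the fact that $AA^\dagger$ is the orthogonal projector onto $R(A)$. Writing $\tilde{x}_1 = A^\dagger A x_1$, the hypothesis $M\begin{pmatrix}\tilde{x}_1\\ x_2\end{pmatrix} = \begin{pmatrix}y_1\\y_2\end{pmatrix}$ in part (1) unpacks into $A\tilde{x}_1 + Bx_2 = y_1$ and $C\tilde{x}_1 + Dx_2 = y_2$, and the goal identity unpacks into the two components of $gppt(M,A)\begin{pmatrix}y_1\\x_2\end{pmatrix}$.

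For the forward implication of (1), I would multiply $gppt(M,A) = \begin{pmatrix}A^\dagger & -A^\dagger B\\ CA^\dagger & D - CA^\dagger B\end{pmatrix}$ into $\begin{pmatrix}y_1\\x_2\end{pmatrix}$. The first block is $A^\dagger(y_1 - Bx_2) = A^\dagger A \tilde{x}_1 = A^\dagger A x_1 = \tilde{x}_1$, using the first unpacked equation and idempotency of $A^\dagger A$. The second block is $CA^\dagger(y_1 - Bx_2) + Dx_2 = CA^\dagger A \tilde{x}_1 + Dx_2 = C\tilde{x}_1 + Dx_2 = y_2$, by the same substitution and the second unpacked equation. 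This produces exactly $\begin{pmatrix}\tilde{x}_1\\y_2\end{pmatrix}$, with no extra hypotheses needed.

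For the converse of (1), I would read the hypothesis $gppt(M,A)\begin{pmatrix}y_1\\x_2\end{pmatrix} = \begin{pmatrix}\tilde{x}_1\\y_2\end{pmatrix}$ as $A^\dagger(y_1 - Bx_2) = \tilde{x}_1$ and $CA^\dagger(y_1 - Bx_2) + Dx_2 = y_2$. Pre-multiplying the first by $A$ gives $AA^\dagger y_1 - AA^\dagger B x_2 = A\tilde{x}_1$, and here the two hypotheses enter decisively: $y_1 \in R(A)$ forces $AA^\dagger y_1 = y_1$, while $R(B)\subseteq R(A)$ forces $AA^\dagger B = B$, so the relation collapses to $A\tilde{x}_1 + Bx_2 = y_1$. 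Substituting $y_1 - Bx_2 = A\tilde{x}_1$ into the second read-off equation and using $CA^\dagger A\tilde{x}_1 = C\tilde{x}_1$ then recovers $C\tilde{x}_1 + Dx_2 = y_2$, giving the two components of $M\begin{pmatrix}\tilde{x}_1\\x_2\end{pmatrix} = \begin{pmatrix}y_1\\y_2\end{pmatrix}$.

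Part (2) is the mirror image under $A\leftrightarrow D$, $B\leftrightarrow C$, with $\tilde{x}_2 = D^\dagger D x_2$; the same argument runs verbatim, the converse now invoking $y_2 \in R(D)$ and $R(C)\subseteq R(D)$ to obtain $DD^\dagger y_2 = y_2$ and $DD^\dagger C = C$. I expect no serious obstacle, since the entire content is the forward--backward passage through $A^\dagger$. The one delicate point is recognizing that the projector identities $AA^\dagger y_1 = y_1$ and $AA^\dagger B = B$ are precisely what $y_1 \in R(A)$ and $R(B)\subseteq R(A)$ supply, and that these are exactly the facts needed to cancel $A^\dagger$ against $A$ when recovering the first block equation. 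This is why the converse, unlike the forward direction, genuinely requires them.
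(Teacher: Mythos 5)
Your proposal is correct and follows essentially the same route as the paper's proof: unpack each identity into its two block equations, pre-multiply by $A^\dagger$ (forward direction) or by $A$ (converse), and invoke $AA^\dagger y_1 = y_1$ and $AA^\dagger B = B$, which is exactly where $y_1 \in R(A)$ and $R(B)\subseteq R(A)$ enter in the paper as well. Your write-up with $\tilde{x}_1 = A^\dagger A x_1$ is in fact slightly cleaner than the paper's (which contains a typo, writing $A^\dagger y_1 - A^\dagger x_2$ for $A^\dagger y_1 - A^\dagger B x_2$), but the argument is the same.
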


	\begin{proof}
		Suppose, $M\left(\begin{array}{c}A^\dag Ax_1  \\ x_2 \\ \end{array}\right)=\left(\begin{array}{c}y_1  \\ y_2 \\ \end{array}\right) $. Then $Ax_1=y_1-Bx_2$ and $CA^\dag Ax_1+Dx_2 = y_2$. Substitute the value of $Ax_1$ in the second equation, we get $$CA^\dag y_1 +(D-CA^\dag B)x_2 =y_2.$$ Pre-multiply the equation by $A^\dag$, we  get $$A^\dag y_1 -A^\dag x_2 = A^\dag Ax_1.$$ Thus $gppt(M,A)\left(\begin{array}{c}y_1  \\ x_2 \\ \end{array}\right) = \left(\begin{array}{c}A^\dag Ax_1  \\ y_2 \\ \end{array}\right)$.
		
		Conversely, suppose that $gppt(M,A)\left(\begin{array}{c}y_1  \\ x_2 \\ \end{array}\right) = \left(\begin{array}{c}A^\dag Ax_1  \\ y_2 \\ \end{array}\right)$. Then $A^\dag y_1=A^\dag Ax_1+A^\dag Bx_2$ and $CA^\dag y_1+Dx_2-CA^\dag Bx_2 =y_2$. Substitute the value of $A^\dag y_1$ in the second equation we get $$CA^\dag Ax_1+Dx_2 = y_2.$$ Pre-multiply the first equation by $A$ we get $$AA^\dag y_1 - AA^\dag Bx_2 = Ax_1.$$ Then using the fact that $AA^\dag B=B$ and $AA^\dag y_1=y_1$,  we  get $$Ax_1+Bx_2=y_1.$$ Thus $M\left(\begin{array}{c}A^\dag Ax_1  \\ x_2 \\ \end{array}\right)= \left(\begin{array}{c}y_1  \\ y_2 \\ \end{array}\right).$
		The second part can be proved in a similar way.   This completes the proof.
	\end{proof}

	The following result is  Theorem 4.2
	in  \cite{bishtravindranKCS} whose proof is not complete as illustrated here :  Let $P=\left(\begin{array}{cc}I &  0\\ B\s {(A\s)} \+ & I \\ \end{array}\right)$, $Q=\left(\begin{array}{cc}I &  A\+ B\\0  & I \\ \end{array}\right)$
	and $N=\left(\begin{array}{cc}A\s A &  0\\ 0& 0 \\ \end{array}\right)$. In  \cite{bishtravindranKCS}, it is claimed that $Q^{-1}N^{(1)} P^{-1}=gppt(M\s M, A\s A)$ which is not true, where $N^{(1)}=\left(\begin{array}{cc}{(A\s A)\+} &  0\\ 0 & 0 \\ \end{array}\right)$. Here we have given the  complete proof.
	
	\begin{theorem}[\cite{bishtravindranKCS}, Theorem 4.2]
		Let $M=\left(A|B\right)$ be a partition matrix with $A\in {\mathbb{R}}^{n\times r}$ and $B \in {\mathbb{R}}^{n\times (n-r)}$. If $R(B) \subseteq R(A)$, then $gppt(M\s M, A\s A)$ is an $\{1\}$-inverse of $M\s M$.
	\end{theorem}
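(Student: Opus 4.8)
The plan is to discard the flawed identification of $gppt(M^{*}M,A^{*}A)$ with $Q^{-1}N^{(1)}P^{-1}$ and instead factor $M^{*}M$ through its only singular ingredient, so that the $\{1\}$-inverse property can be read off directly.

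First I would use the hypothesis $R(B)\subseteq R(A)$, which is equivalent to $AA^{\dagger}B=B$. Putting $K=A^{\dagger}B$ gives $B=AK$, so with $W=A^{*}A$ one has $A^{*}B=WK$, $B^{*}A=K^{*}W$ and $B^{*}B=K^{*}WK$, and hence
\[
M^{*}M=\begin{pmatrix} W & WK \\ K^{*}W & K^{*}WK \end{pmatrix}=U\,W\,V,\qquad U=\begin{pmatrix} I \\ K^{*}\end{pmatrix},\quad V=\begin{pmatrix} I & K\end{pmatrix}.
\]
This confines all the degeneracy of $M^{*}M$ to the symmetric positive semidefinite factor $W$.

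Next I would evaluate $gppt(M^{*}M,W)$ from the definition and simplify it using the Moore--Penrose identities $W^{\dagger}W=WW^{\dagger}=P_{R(W)}$ and $WW^{\dagger}W=W$, together with $R(K)=R(A^{\dagger}B)\subseteq R(A^{*})=R(W)$, which yields $W^{\dagger}WK=K$ and $K^{*}WW^{\dagger}=K^{*}$. The decisive point is that the generalized Schur complement vanishes,
\[
K^{*}WK-(K^{*}W)W^{\dagger}(WK)=K^{*}WK-K^{*}WK=0,
\]
so that $\widetilde{M}:=gppt(M^{*}M,W)=\begin{pmatrix} W^{\dagger} & -K \\ K^{*} & 0\end{pmatrix}$ (the off-diagonal blocks simplifying by the same range facts).

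Finally I would verify $(M^{*}M)\widetilde{M}(M^{*}M)=M^{*}M$ by collapsing the middle factor. A block multiplication gives $V\widetilde{M}U=W^{\dagger}+KK^{*}-KK^{*}=W^{\dagger}$, whence
\[
(M^{*}M)\widetilde{M}(M^{*}M)=UW\,(V\widetilde{M}U)\,WV=UW\,W^{\dagger}W\,V=UWV=M^{*}M,
\]
again using $WW^{\dagger}W=W$. The main obstacle is precisely where the cited argument breaks down: $Q^{-1}N^{(1)}P^{-1}$ reduces to $\begin{pmatrix} W^{\dagger} & 0 \\ 0 & 0\end{pmatrix}$, which is \emph{not} $gppt(M^{*}M,A^{*}A)$, so the genuine content lies in the two reductions above — the vanishing of the Schur complement and the collapse $V\widetilde{M}U=W^{\dagger}$ — both of which hinge on $R(B)\subseteq R(A)$ and on $R(A^{\dagger}B)\subseteq R(A^{*})$.
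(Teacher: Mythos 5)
Your proof is correct, and it takes a genuinely different route from the paper's. The paper factors $M^*M=PNQ$ with \emph{invertible} triangular factors $P=\left(\begin{smallmatrix}I&0\\ B^*(A^*)^\dagger&I\end{smallmatrix}\right)$, $Q=\left(\begin{smallmatrix}I&A^\dagger B\\ 0&I\end{smallmatrix}\right)$ and $N=\left(\begin{smallmatrix}A^*A&0\\ 0&0\end{smallmatrix}\right)$, notes that $Q^{-1}N^{(1)}P^{-1}=N^{(1)}$ is one $\{1\}$-inverse of $M^*M$, and then invokes the parametrization of all $\{1\}$-inverses from Theorem \ref{solution}: choosing $Z=\left(\begin{smallmatrix}0&-A^\dagger B\\ B^*(A^*)^\dagger&0\end{smallmatrix}\right)$ in $K=N^{(1)}+Z-N^{(1)}M^*M\,Z\,M^*M\,N^{(1)}$, the correction term vanishes because $AA^\dagger B=B$ and $B^*(A^\dagger)^*A^*=B^*$, and $K=N^{(1)}+Z$ is identified with $gppt(M^*M,A^*A)$ --- which in your notation ($W=A^*A$, $K=A^\dagger B$) is exactly your $\left(\begin{smallmatrix}W^\dagger&-K\\ K^*&0\end{smallmatrix}\right)$, since $B^*(A^*)^\dagger=(A^\dagger B)^*$. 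You instead verify the single defining equation $(M^*M)X(M^*M)=M^*M$ directly, via the rectangular factorization $M^*M=UWV$ and your two reductions (vanishing generalized Schur complement and the sandwich collapse $V\widetilde{M}U=W^\dagger$); all the facts you invoke --- $R(K)\subseteq R(A^*)=R(W)$, $W^\dagger W=WW^\dagger$ for the Hermitian $W$, and $WW^\dagger W=W$ --- are standard and used correctly. What the paper's argument buys is structural: it exhibits $gppt(M^*M,A^*A)$ as an explicit member $N^{(1)}+Z$ of the affine family of all $\{1\}$-inverses of $M^*M$, which is precisely how it repairs the flawed identification $Q^{-1}N^{(1)}P^{-1}=gppt(M^*M,A^*A)$ in the cited source. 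What yours buys is economy and transparency: it needs only the first Penrose equation plus Moore--Penrose basics, and it isolates exactly where the hypothesis $R(B)\subseteq R(A)$ enters (once to write $B=AK$, once to kill the Schur complement); moreover, your aside that $Q^{-1}N^{(1)}P^{-1}$ collapses to $\left(\begin{smallmatrix}W^\dagger&0\\ 0&0\end{smallmatrix}\right)$ independently confirms the paper's criticism of the original argument.
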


	\begin{proof}
		We have $M\s M=\left(\begin{array}{cc}A\s A &  A\s B\\ B\s A & B\s B \\ \end{array}\right)$. Let $P$, $N$, $Q$ and $N^{(1)}$ be as given above. Since $R(B) \subseteq R(A)$, we have $M\s M= PNQ$. Now $Q^{-1}N^{(1)}P^{-1}= N^{(1)}$ is an $\{1\}$-inverse of $M\s M$ and  by  Theorem \ref{solution}, $K= N^{(1)}+Z- N^{(1)}M\s MZM\s M  N^{(1)}$ is also an $\{1\}$-inverse of $M\s M$ for any matrix $Z$ of appropriate size. Taking $Z=\left(\begin{array}{cc}0 &  -A\+ B\\ B\s{(A\s)}\+ & 0 \\ \end{array}\right)$ we get $$K= N^{(1)}+Z- \left(\begin{array}{cc}-{A\s A}\+ A\s[-AA\+BB\s + BB\s{(A\+)}\s A\s] A{(A\s A)\+} &  0\\ 0 & 0 \\ \end{array}\right).$$ Since $R(B)\subseteq R(A)$ we have $AA\+B=B$ and $B\s {(A\+)}\s A\s = B\s$. 
		Therefore $K= N^{(1)}+Z = \left(\begin{array}{cc}{(A\s A)\+} &  -A\+ B\\B\s{(A\s)}\+ & 0 \\ \end{array}\right)= gppt(M\s M, A\s A)$.
	\end{proof}

	\section{Some Inheritance Properties for Generalized Principal Pivot Transform}
	
	In this section, we consider inheritance properties of the generalized principal pivot transform with  two  classes of matrices,  $P_{\+}$-matrices \cite{rajeshkannan2} and $R_{\+}$-matrices \cite{MR3093070}.  These classes of matrices are relevant and useful in the context of the linear complementarity problem.

	\begin{theorem}\label{t15}
		Let $M=\begin{pmatrix}A& B\\ C& D\end{pmatrix}$ be a real matrix with $A$ and $D$ are square matrices. 	Let $M_0=\begin{pmatrix}A& AA\+B\\ CA\+A& D\end{pmatrix}$ and 
		$F=D-CA\+ B$. Suppose $R(C) \subseteq R(F)$ and $R(B\s) \subseteq R(F\s)$. If $M_0$ is a $P_\dagger$- matrix, then $gppt(M,A)$, $A$, $D$ are $P_\dagger$- matrices.
		On the other hand, suppose $R(C) \subseteq R(D)$ and $R(B\s) \subseteq R(D\s).$ If $gppt(M,A)$ is a $P_{\+}$-matrix, then $M_0$, $A$ and $F$ are  $P_{\+}$-matrices.
	\end{theorem}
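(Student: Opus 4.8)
The plan is to reduce both assertions to a single pair of facts about the generalized principal pivot transform and then exploit the duality $gppt(M,A)=gppt(M_0,A)$ together with $M_0=gppt(gppt(M,A),A\+)$. A direct block computation (as in the proof of Theorem \ref{mformula}) shows $P:=gppt(M,A)=gppt(M_0,A)$, that the generalized Schur complement of $A$ in $M_0$ is exactly $F$, and that the generalized Schur complement of $A\+$ in $P$ is exactly $D$; moreover $gppt(P,A\+)=M_0$. Consequently the ``on the other hand'' half is just the first half applied to $P$ (with $A\+$ as pivot block and $D$ as Schur complement), so it suffices to prove the first half. I would first record that the hypotheses $R(C)\subseteq R(F)$ and $R(B\s)\subseteq R(F\s)$ are precisely the two non-automatic inclusions of Theorem \ref{t31} for $M_0$: the off-diagonal blocks of $M_0$ are $AA\+B$ and $CA\+A$, so $R(AA\+B)\subseteq R(A)$ and $R((CA\+A)\s)\subseteq R(A\s)$ hold for free, while $R(CA\+A)\subseteq R(C)\subseteq R(F)$ and $R((AA\+B)\s)\subseteq R(B\s)\subseteq R(F\s)$ follow from the hypotheses. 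Thus Theorem \ref{t31} applies to $M_0$ and pins down the relevant range spaces.

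The core step is to show that $gppt$ preserves the $P_\dagger$ property, for which I would use the domain--range exchange of Theorem \ref{t11}. Given $x_1,x_2$, set $u=\begin{pmatrix}A\+Ax_1\\ x_2\end{pmatrix}$, write $M_0u=\begin{pmatrix}y_1\\ y_2\end{pmatrix}$, and put $v=\begin{pmatrix}y_1\\ x_2\end{pmatrix}$; Theorem \ref{t11} then gives $Pv=\begin{pmatrix}A\+Ax_1\\ y_2\end{pmatrix}$. The decisive observation is that the componentwise products match index by index, namely $(u)_i(M_0u)_i=(v)_i(Pv)_i$ for every $i$ (the top blocks are $(A\+Ax_1)_i(y_1)_i$ versus $(y_1)_i(A\+Ax_1)_i$, and the bottom blocks are literally equal), and that $u=0$ if and only if $v=0$. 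Hence the sign condition ``$(\cdot)_i(\cdot)_i\le 0$ for all $i$'' is carried between $M_0$ and $P$. The remaining ingredient is that the linear exchange $v\mapsto u$, implemented by $Z=\begin{pmatrix}A\+ & -A\+B\\ 0 & I\end{pmatrix}$ from Lemma \ref{x}, carries $R(P\s)$ into $R(M_0\s)$ and back; granting this, a nonzero $v\in R(P\s)$ with all products $\le 0$ would produce a nonzero $u\in R(M_0\s)$ with all products $\le 0$, contradicting that $M_0$ is a $P_\dagger$-matrix. Therefore $P$ is a $P_\dagger$-matrix.

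For the two diagonal blocks I would argue with one-sided test vectors. To see that $A$ is a $P_\dagger$-matrix, take a nonzero $x_1\in R(A\s)$ and put $u=\begin{pmatrix}x_1\\ 0\end{pmatrix}$; since $A\+Ax_1=x_1$ we get $M_0u=\begin{pmatrix}Ax_1\\ Cx_1\end{pmatrix}$, so the componentwise products of $u$ with $M_0u$ are $(x_1)_i(Ax_1)_i$ on top and $0$ below. A short computation of $N(M_0)=\{(z-A\+Bw,\,w):z\in N(A),\,w\in N(F)\}$ shows that $u\in R(M_0\s)$ amounts to $B\s(A\+)\s x_1\in R(F\s)$, which holds because $R(B\s)\subseteq R(F\s)$. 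If $A$ failed to be a $P_\dagger$-matrix there would be such an $x_1$ with $(x_1)_i(Ax_1)_i\le 0$ for all $i$, forcing $u=0$ by the $P_\dagger$-property of $M_0$, a contradiction. For the block $D$ I would use $u=\begin{pmatrix}0\\ x_2\end{pmatrix}$ with $x_2\in R(D\s)$; here one needs $x_2\in R(F\s)$, which is automatic since $D\s=F\s+B\s(A\+)\s C\s$ gives $R(D\s)\subseteq R(F\s)$. The same two computations applied to $P$ (with $A\+$ and $D$ in place of $A$ and $F$) yield that $A\+$ and $F$ are $P_\dagger$-matrices, and since a matrix is $P_\dagger$ if and only if its Moore--Penrose inverse is, $A\+$ being $P_\dagger$ gives $A$, completing the second half.

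The step I expect to be the genuine obstacle is the range correspondence $Z\,R(P\s)\subseteq R(M_0\s)$ together with its converse. Everything else is either a block identity or the purely combinatorial transfer of the sign pattern; but matching the two range spaces requires the triangular factorizations $M_0=\begin{pmatrix}I&0\\ CA\+&I\end{pmatrix}\begin{pmatrix}A&AA\+B\\ 0&F\end{pmatrix}$ and $P=\begin{pmatrix}I&0\\ C&I\end{pmatrix}\begin{pmatrix}A\+&-A\+B\\ 0&D\end{pmatrix}$, used in tandem with the full strength of the hypotheses (equivalently, Theorem \ref{t31}). This bookkeeping, rather than the sign transfer, is where the care is needed.
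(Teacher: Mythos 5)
Your proposal is correct and follows essentially the same route as the paper's proof: the domain--range exchange of Theorem \ref{t11} with the index-by-index transfer of the sign pattern, the one-sided test vectors $\left(\begin{smallmatrix}x_1\\0\end{smallmatrix}\right)$ and $\left(\begin{smallmatrix}0\\x_2\end{smallmatrix}\right)$ for the diagonal blocks (including your observation that $R(D^*)\subseteq R(F^*)$), and, for the second half, exactly the duality $gppt(M_0,A)=gppt(M,A)$, $gppt(gppt(M,A),A^\dagger)=M_0$ combined with the fact that a matrix is $P_\dagger$ if and only if its Moore--Penrose inverse is. The only divergence is the step you flag as the genuine obstacle: the paper needs no triangular factorizations there, because Theorem \ref{t31} (which applies to $M_0$ under your verified inclusions) yields $(M_0)^\dagger M_0=\begin{pmatrix}A^\dagger A&0\\0&F^\dagger F\end{pmatrix}$, so membership of the transferred vector $x$ in $R(M_0^*)$ is checked in one line from $(M_0)^\dagger M_0x=x$, using that its top block lies in $R(A^*)$ automatically and its bottom block lies in $R(F^*)$ by $z\in R(gppt(M,A)^*)$ together with $R(B^*)\subseteq R(F^*)$.
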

	
	\begin{proof}
		Let $H=gppt(M,A)$. Consider $z=\left(\begin{array}{c}z_1\\ z_2\end{array}\right) \in R(H\s)$. Then there exists $v=\left(\begin{array}{c}v_1\\ v_2\end{array}\right)$ such that $H\s v =z$. Then $z_1= (A^\dagger)\s v_1+ (CA^\dagger)\s v_2$ and $z_2=(-A\+ B)\s v_1 + F\s v_2$. This gives that $z_1 \in R(A)$. So $z_1= AA\+z_1$. Also since $R(B\s)\subseteq R(F\s)$ we have $z_2 \in R(F\s)$ and so $z_2=F\+ F z_2$. Let $w=\left(\begin{array}{c}w_1\\ w_2\end{array}\right) = Hz$. Then $w_1= A\+ z_1 - A\+ B z_2$ and $w_2 = CA\+ z_1 + Fz_2$. This clearly gives that $w_1 \in R(A\+)$ and so $w_1 = A\+ A w_1$. Therefore $Hz=H\left(\begin{array}{c}AA\+z_1\\ z_2\end{array}\right)= w=\left(\begin{array}{c}A\+ A w_1\\ w_2\end{array}\right)$. Observe that $gppt(H,A\+)=M_0$. Now by $1$ of Theorem \ref{t11} we have $M_0\left(\begin{array}{c}A\+Aw_1\\ z_2\end{array}\right) = \left(\begin{array}{c}AA\+z_1\\ w_2\end{array}\right) =\left(\begin{array}{c}z_1\\ w_2\end{array}\right).$ Let $x=\left(\begin{array}{c}w_1\\ z_2\end{array}\right)$.
		Now suppose $(z)_i (Hz)_i \leq 0$  for all $i$. Then $(z_1)_i (w_1)_i \leq 0$ for all $i$ and $(z_2)_j (w_2)j \leq 0$ for all $j$. Now observe that $(w_1)_i=(x)_i$ for all $i$ ; for some $s$, $w_1\in \mathbb{C}^s$, hence $(z_2)_j=(x)_{j+s}$  for all $j$.  Similarly, 		 $(z_1)_i=(M_0x)_i$ for all $i$ ; 
		for some $t$, $z_1\in \mathbb{C}^t$, hence
		$(w_2)_j=(M_0x)_{j+t}$ for all $j$. Therefore we have $(x)_i(M_0x)_i \leq 0$, for all $i$. 
		By Theorem \ref{t31} we have $(M_0)\+ M_0= \begin{pmatrix}A\+A& 0\\ 0& F\+F\end{pmatrix}$. Hence $(M_0)\+ M_0 x= \left(\begin{array}{c}A\+ Aw_1\\ F\+Fz_2\end{array}\right)= \left(\begin{array}{c}w_1\\ z_2\end{array}\right)=x$. Therefore $x \in R((M_0)\s)$. Since $M_0 $ is a $P_{\+}$-matrix we have $x=0$. That is, $w_1=0$ and $z_2=0$.
Since	$M_0 \left(\begin{array}{c}w_1\\ z_2\end{array}\right)= \left(\begin{array}{c}z_1\\ w_2\end{array}\right) $  $z_1 = 0$ and therefore we have $z=0$. Hence $gppt(M,A)$ is a $P_{\+}$-matrix.

 Now we show that $A$ is a $P_{\+}$-matrix. Let $x_1 \in R(A\s)$. Suppose $(x_1)_i(Ax_1)_i \leq 0$ for all $i$. Take $x= \left(\begin{array}{c}x_1\\ 0\end{array}\right)$. Then $(M_0)\+ M_0 x= \left(\begin{array}{c}A\+ Ax_1\\ F\+F0\end{array}\right)= x$. Therefore $x \in R((M_0)\s)$. Also $(x)_j (M_0 x)_j=\left(\begin{array}{c}(x_1)_j(Ax_1)_j\\ 0\end{array}\right) \leq 0$, for all $j.$ Since $M_0$ is a $P_{\+}$-matrix, we have $x=0$, hence $x_1=0$. Therefore $A$ is a $P_{\+}$-matrix. 		
 Finally we show that $D$ is a $P_{\+}$-matrix. Let $x_2 \in R(D\s)$ so that $x_2= D\+Dx_2$. Since $F=D-CA\+B$ and $R(B\s) \subseteq R(F\s)$, we have $R(D\s) \subseteq R(F\s)$. Therefore $x_2=F\+Fx_2$. Suppose $(x_2)_i(Dx_2)_i \leq 0$ for all $i$. Define $x= \left(\begin{array}{c}0\\ x_2\end{array}\right)$. Then $(M_0)\+ M_0 x= \left(\begin{array}{c}A\+ A0\\ F\+Fx_2\end{array}\right)= x$. Therefore $x \in R(M_0)\s$. Also $(x)_j (M_0 x)_j \leq 0$, for all $j.$  Since $M_0$ is a $P_{\+}$-matrix, we have $x=0$, hence $x_2=0$. Therefore $D$ is a $P_{\+}$-matrix.  
 The proof of other part follows  by observing
		$gppt(M_0, A)=gppt(M,A)=H$, $gppt(H, A\+)=M_0$ and applying the fact that 
		a real matrix $A$ is a $P_{\+}$-matrix if and only if $A\+$ is a
		$P_{\+}$-matrix.		
	\end{proof}
	\begin{corollary}
		Let $M=\begin{pmatrix}A& B\\ C& D\end{pmatrix}$ be a real matrix with $A$ and $D$ are square matrices. 	Let $M_0=\begin{pmatrix}A& AA\+B\\ CA\+A& D\end{pmatrix}$ and 
		$F=D-CA\+ B$. Suppose $R(C) \subseteq R(F)$, $R(B\s) \subseteq R(F\s)$,  $R(C) \subseteq R(D)$ and $R(B\s) \subseteq R(D\s).$ Then
		$M_0$ is a $P_\dagger$- matrix if and only if $gppt(M,A)$ is a $P_\dagger$- matrix.

	\end{corollary}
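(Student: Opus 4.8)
The plan is to recognize this corollary as a direct repackaging of Theorem~\ref{t15}, whose two halves already supply the two implications of the biconditional. The corollary assumes all four range inclusions at once, namely $R(C)\subseteq R(F)$, $R(B\s)\subseteq R(F\s)$, $R(C)\subseteq R(D)$ and $R(B\s)\subseteq R(D\s)$, which is exactly the union of the hypotheses needed for the forward and reverse statements of Theorem~\ref{t15}. So no new argument is required; I would simply dispatch each direction to the appropriate half of the theorem.

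First I would prove the forward implication. Assume $M_0$ is a $P_{\+}$-matrix. The first part of Theorem~\ref{t15} requires precisely $R(C)\subseteq R(F)$ and $R(B\s)\subseteq R(F\s)$, both of which are among the standing hypotheses of the corollary. That theorem then yields that $gppt(M,A)$ (along with $A$ and $D$) is a $P_{\+}$-matrix, which is the conclusion I want. Next I would prove the reverse implication. Assume $gppt(M,A)$ is a $P_{\+}$-matrix. The second part of Theorem~\ref{t15} requires $R(C)\subseteq R(D)$ and $R(B\s)\subseteq R(D\s)$, again both assumed in the corollary, and concludes that $M_0$ (along with $A$ and $F$) is a $P_{\+}$-matrix. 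Combining the two gives the stated equivalence.

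I do not anticipate any genuine obstacle, since all the substantive work has been carried out in the proof of Theorem~\ref{t15}; the only care needed is bookkeeping, checking that the four inclusions in the corollary's hypothesis split correctly into the $F$-side pair feeding the forward direction and the $D$-side pair feeding the reverse direction. If I wished to make the proof self-contained I could note the auxiliary identities used inside Theorem~\ref{t15}, chiefly $gppt(M_0,A)=gppt(M,A)$ and $gppt(gppt(M,A),A\+)=M_0$ together with the fact that a real matrix $A$ is a $P_{\+}$-matrix if and only if $A\+$ is, but for the corollary itself a one-line citation to each half of Theorem~\ref{t15} suffices.
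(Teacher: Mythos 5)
Your proof is correct and matches the paper's intent exactly: the corollary appears immediately after Theorem~\ref{t15} with no separate argument, precisely because the four range inclusions split into the pair $R(C)\subseteq R(F)$, $R(B\s)\subseteq R(F\s)$ feeding the forward half and the pair $R(C)\subseteq R(D)$, $R(B\s)\subseteq R(D\s)$ feeding the reverse half, just as you dispatch them. No gap to report.
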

	A similar result holds for complementary generalized principal pivot transform. We state these result below and the proof is omitted.

	\begin{theorem}
		Let $M=\begin{pmatrix}A& B\\ C& D\end{pmatrix}$ be a real matrix with $A$ and $D$ are square matrices. 	Let $M_1=\begin{pmatrix}A& BD\+D\\ DD\+C& D\end{pmatrix}$ and 
		$G=A-BD\+C$. Suppose $R(B) \subseteq R(G)$ and $R(C\s) \subseteq R(G\s)$.
		If $M_1$ is a $P_\dagger$- matrix, then $gppt(M,D)$, $A$, $D$ are $P_\dagger$- matrices.
		On the other hand, suppose $R(B) \subseteq R(A)$ and $R(C\s) \subseteq R(A\s).$ If $gppt(M,D)$ is a $P_{\+}$-matrix, then $M_1$, $D$ and $G$ are  $P_{\+}$-matrices.
	\end{theorem}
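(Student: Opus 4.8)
The plan is to transcribe the proof of Theorem~\ref{t15} under the substitutions $A\leftrightarrow D$, $F\leftrightarrow G$ and $M_0\leftrightarrow M_1$, using part~$2$ of Theorem~\ref{t11} and (\cite{bishtravindranKCS}, Theorem~3.2) in place of their counterparts. Write $Q=gppt(M,D)=\begin{pmatrix}G & BD\+\\ -D\+C & D\+\end{pmatrix}$. A direct computation shows that $gppt(Q,D\+)=M_1$. Applying (\cite{bishtravindranKCS}, Theorem~3.2) to $M_1$ — whose four range inclusions hold, two of them automatically and the remaining two because $R(B)\subseteq R(G)$ and $R(C\s)\subseteq R(G\s)$ — one obtains the formula for $(M_1)\+$ and hence $(M_1)\+M_1=\begin{pmatrix}G\+G & 0\\ 0 & D\+D\end{pmatrix}$.

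For the first assertion, take $z=\begin{pmatrix}z_1\\ z_2\end{pmatrix}\in R(Q\s)$ and write $z=Q\s v$. Reading off the blocks of $Q\s$ and using $R(C\s)\subseteq R(G\s)$ gives $z_1=G\+Gz_1$ and $z_2=DD\+z_2$; setting $w=Qz$ we have $w_2\in R(D\+)=R(D\s)$, so $w_2=D\+Dw_2$. Part~$2$ of Theorem~\ref{t11} (pivoting on $D\+$, and using $z_2=DD\+z_2$) then yields $M_1\begin{pmatrix}z_1\\ w_2\end{pmatrix}=\begin{pmatrix}w_1\\ z_2\end{pmatrix}$. Put $x=\begin{pmatrix}z_1\\ w_2\end{pmatrix}$; the block-diagonal form of $(M_1)\+M_1$ together with $z_1=G\+Gz_1$ and $w_2=D\+Dw_2$ gives $(M_1)\+M_1x=x$, so $x\in R((M_1)\s)$. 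If $(z)_i(Qz)_i\le 0$ for all $i$, then since the first block of $x$ (resp. $M_1x$) is $z_1$ (resp. $w_1$) and the second block of $x$ (resp. $M_1x$) is $w_2$ (resp. $z_2$), a block-by-block comparison gives $(x)_k(M_1x)_k\le 0$ for all $k$; as $M_1$ is a $P_\dagger$-matrix, $x=0$. Hence $z_1=0$, $w_2=0$, and then $M_1x=0=\begin{pmatrix}w_1\\ z_2\end{pmatrix}$ forces $z_2=0$, so $z=0$ and $gppt(M,D)$ is $P_\dagger$. The matrices $D$ and $A$ are treated by the specialisations $x=\begin{pmatrix}0\\ x_2\end{pmatrix}$ with $x_2\in R(D\s)$ and $x=\begin{pmatrix}x_1\\ 0\end{pmatrix}$ with $x_1\in R(A\s)$, respectively; for the latter one first notes that $A=G+BD\+C$ and $R(C\s)\subseteq R(G\s)$ yield $R(A\s)\subseteq R(G\s)$, whence $x_1=G\+Gx_1$.

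For the converse, assume $R(B)\subseteq R(A)$, $R(C\s)\subseteq R(A\s)$ and that $Q=gppt(M,D)$ is a $P_\dagger$-matrix. The idea is to apply the first assertion to the matrix $Q$ with pivot block $D\+$. Direct computation shows that the generalized Schur complement of $Q$ with respect to $D\+$ equals $A$, that the $M_1$-construction applied to $Q$ returns $Q$ itself, and that $gppt(Q,D\+)=M_1$. The two inclusions required to invoke the first assertion, namely $R(BD\+)\subseteq R(A)$ and $R((D\+C)\s)\subseteq R(A\s)$, follow from $R(B)\subseteq R(A)$ and $R(C\s)\subseteq R(A\s)$. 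The first assertion then gives that $gppt(Q,D\+)=M_1$, the $(1,1)$ block $G$, and the $(2,2)$ block $D\+$ are all $P_\dagger$-matrices; since a real matrix is a $P_\dagger$-matrix if and only if its Moore-Penrose inverse is, $D\+$ being a $P_\dagger$-matrix yields that $D$ is a $P_\dagger$-matrix, completing the proof.

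I expect the only real difficulty to be bookkeeping: checking that the four range hypotheses of (\cite{bishtravindranKCS}, Theorem~3.2) genuinely hold for $M_1$ so that $(M_1)\+M_1$ is block-diagonal, and matching indices correctly in the implication $(z)_i(Qz)_i\le 0\Rightarrow(x)_k(M_1x)_k\le 0$ across two blocks of possibly different sizes. No idea beyond the proof of Theorem~\ref{t15} is needed.
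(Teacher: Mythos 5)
Your proposal is correct and matches the paper's intent exactly: the paper omits this proof, stating it is the $A\leftrightarrow D$ analogue of Theorem~\ref{t15}, and your transcription (pivoting on $D$, using part~2 of Theorem~\ref{t11}, verifying $gppt(Q,D\+)=M_1$ and the block-diagonal form $(M_1)\+M_1=\begin{pmatrix}G\+G & 0\\ 0 & D\+D\end{pmatrix}$ via the range inclusions $R(BD\+D)\subseteq R(B)\subseteq R(G)$ and $R((DD\+C)\s)\subseteq R(C\s)\subseteq R(G\s)$) carries it out faithfully, including the correct observation $R(A\s)\subseteq R(G\s)$ needed for the block $A$ and the self-application to $Q$ with pivot $D\+$ for the converse, mirroring the paper's closing argument in Theorem~\ref{t15}. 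All the bookkeeping you flagged checks out, so no gaps remain.
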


	\begin{corollary}
		Let $M=\begin{pmatrix}A& B\\ C& D\end{pmatrix}$ be a real matrix with $A$ and $D$ are square matrices. 	Let $M_1=\begin{pmatrix}A& BD\+D\\ DD\+C& D\end{pmatrix}$ and 
		$G=A-BD\+C$. Suppose $R(B) \subseteq R(G)$, $R(C\s) \subseteq R(G\s)$,   $R(B) \subseteq R(A)$ and $R(C\s) \subseteq R(A\s).$ Then
		$M_1$ is a $P_\dagger$- matrix if and only if $gppt(M,D)$ is a $P_\dagger$- matrix.

	\end{corollary}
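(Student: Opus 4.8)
The plan is to transcribe the proof of Theorem~\ref{t15}, interchanging the roles of $A$ and $D$ (and correspondingly of $B$ and $C\s$), replacing part~1 of Theorem~\ref{t11} by part~2, and Theorem~\ref{t31} by (\cite{bishtravindranKCS}, Theorem 3.2). Writing $H=gppt(M,D)=\begin{pmatrix}G & BD\+\\ -D\+C & D\+\end{pmatrix}$, I would first record three structural identities: the computation in the proof of Theorem~\ref{mformula} gives $gppt(H,D\+)=M_1$; the generalized Schur complement of $D$ in $M_1$ is again $G$; and under $R(B)\subseteq R(G)$, $R(C\s)\subseteq R(G\s)$ the range hypotheses of (\cite{bishtravindranKCS}, Theorem 3.2) hold for $M_1$ (because $R(BD\+D)\subseteq R(B)\subseteq R(G)$ and $R((DD\+C)\s)\subseteq R(C\s)\subseteq R(G\s)$), so that $(M_1)\+M_1=\begin{pmatrix}G\+G & 0\\ 0 & D\+D\end{pmatrix}$.

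For the forward implication I would take $z=\begin{pmatrix}z_1\\z_2\end{pmatrix}\in R(H\s)$, say $z=H\s v$. Reading the two block rows of $H\s$ and using $R(C\s)\subseteq R(G\s)$ shows $z_1\in R(G\s)$, hence $z_1=G\+Gz_1$, while the second row shows $z_2\in R(D)$, hence $z_2=DD\+z_2$. With $w=\begin{pmatrix}w_1\\w_2\end{pmatrix}=Hz$ the second row gives $w_2=D\+Dw_2$. Part~2 of Theorem~\ref{t11}, applied to $H$ with pivot $D\+$ (so that $gppt(H,D\+)=M_1$), then yields $M_1\begin{pmatrix}z_1\\w_2\end{pmatrix}=\begin{pmatrix}w_1\\z_2\end{pmatrix}$. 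Setting $x=\begin{pmatrix}z_1\\w_2\end{pmatrix}$, the block-diagonal form of $(M_1)\+M_1$ gives $(M_1)\+M_1x=x$, so $x\in R((M_1)\s)$. Assuming $(z)_i(Hz)_i\le0$ for every $i$ and matching coordinates — those of $x$ with $(z_1,w_2)$ and those of $M_1x$ with $(w_1,z_2)$ — yields $(x)_i(M_1x)_i\le0$ for every $i$; since $M_1$ is $P_{\+}$ this forces $x=0$, i.e. $z_1=w_2=0$, whence $M_1x=0$ gives $w_1=z_2=0$ and $z=0$. Thus $gppt(M,D)$ is $P_{\+}$. To see that $D$ and $A$ are $P_{\+}$ I would feed $(M_1)\+M_1$ the vectors $\begin{pmatrix}0\\x_2\end{pmatrix}$ with $x_2\in R(D\s)$ and $\begin{pmatrix}x_1\\0\end{pmatrix}$ with $x_1\in R(A\s)$; the first lands in $R((M_1)\s)$ immediately, while the second does so because $R(A\s)\subseteq R(G\s)$ (a consequence of $G=A-BD\+C$ and $R(C\s)\subseteq R(G\s)$), after which the relevant sign hypothesis together with $P_{\+}$-ness of $M_1$ forces $x_2=0$ and $x_1=0$ respectively.

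For the converse I would apply the part just proved to $H=gppt(M,D)$ itself, pivoting on its $(2,2)$ block $D\+$. The key observations are that the $M_1$-type construct of $H$ with respect to $D\+$ collapses back to $H$, that the generalized Schur complement of $D\+$ in $H$ equals $A$, and that $gppt(H,D\+)=M_1$. The standing hypotheses $R(B)\subseteq R(A)$ and $R(C\s)\subseteq R(A\s)$ supply the inclusions $R(BD\+)\subseteq R(A)$ and $R((D\+C)\s)\subseteq R(A\s)$ needed to invoke the first part with $H$ (which is its own construct and is $P_{\+}$ by hypothesis). Its conclusion returns that $gppt(H,D\+)=M_1$, the top-left block $G$, and the pivot block $D\+$ are $P_{\+}$; since $D\+$ is $P_{\+}$ exactly when $D$ is \cite{rajeshkannan2}, this gives that $M_1$, $D$ and $G$ are $P_{\+}$.

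The main obstacle is bookkeeping rather than ideas: one must verify with care the range inclusions that make $(M_1)\+M_1$ block-diagonal via (\cite{bishtravindranKCS}, Theorem 3.2), and then track the coordinate correspondence so that the scalar inequalities $(z)_i(Hz)_i\le0$ convert cleanly into $(x)_i(M_1x)_i\le0$. For the converse, the subtle point is checking the two self-referential identities — that the construct of $H$ with respect to $D\+$ is again $H$ and that its Schur complement is $A$ — since these are precisely what allow the first part to be reused instead of reproved.
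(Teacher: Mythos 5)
Your proposal is correct and is essentially the paper's own argument: the paper omits the proof, declaring the $D$-pivot case "similar" to Theorem \ref{t15}, and the corollary is just the conjunction of the two directions of the preceding theorem, which is exactly what you carry out. Your transcription checks out in all the delicate spots --- $gppt(H,D\+)=M_1$, the Schur complement of $D$ in $M_1$ being $G$, the block-diagonal form $(M_1)\+M_1=\begin{pmatrix}G\+G & 0\\ 0 & D\+D\end{pmatrix}$ via Theorem 3.2 of \cite{bishtravindranKCS}, the use of part 2 of Theorem \ref{t11}, and the self-referential identities ($H$ equals its own $M_1$-type construct with pivot $D\+$, with Schur complement $A$) that drive the converse, mirroring the paper's closing observation $gppt(M_0,A)=H$, $gppt(H,A\+)=M_0$ in Theorem \ref{t15}.
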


	If we drop the range space conditions in  Theorem \ref{t15}, then the result may not hold, as shown in the following example.
	\begin{example}
		
		Let $M=\left(\begin{array}{ccc}2& -2& 1\\
		2& -2& 1\\
		-1& 1& -0.5\end{array}\right)$	with $A=\left(\begin{array}{ccc}2& \hspace{0.1cm} -2\\
		2& \hspace{0.1cm} -2
		\end{array}\right)$, $B=\left(\begin{array}{ccc}1\\
		1
		\end{array}\right)$, $C=\left(\begin{array}{ccc}-1& \hspace{0.1cm} 1
		\end{array}\right)$, $D=(-0.5)$. Now $A\+ = \left(\begin{array}{ccc}\frac{1}{8}& \hspace{0.1cm} \frac{1}{8}\\
		
		\\
		\frac{-1}{8}& \hspace{0.1cm} \frac{-1}{8}
		\end{array}\right)$, $M_0 = M$ and $F=0$. Therefore $R(C) \nsubseteq R(F)$ and $R(B\s) \nsubseteq R(F\s)$. 
		Let $x \in R((M_0)\s)$. Then $x=\alpha (2, -2, 1)\s$, where $\alpha \in \mathbb{R}$. Suppose $(x)_i (Mx)_i \leq 0$ for $i=1, 2, 3.$ Then $18 (\alpha)^2 \leq 0$. This gives $\alpha =0$. Therefore $x=0$. So $M_0$ is a $P_{\+}$-matrix.\\
		Now $H=gppt(M,A)=\left(\begin{array}{ccc}0.125& 0.125& -0.25\\
		-0.125& -0.125& 0.25\\
		-0.25& -0.25& 0\end{array}\right)$. Let $y=(0, 0, -1)\s$. Then $0 \neq y\in R(H\s)$ and $(y)_i(Hy)_i \leq 0$ for all $i=1, 2, 3$ which show that $H$ is not a $P_{\+}$-matrix.
		Also $0\neq x_0= (1,-1)\s \in R(A\s)$ and $(x_0)_i( Ax_0)_i = 0$. Therefore $A$ is not a $P_{\+}$-matrix.
		Now take $x_1=(1)$, clearly $x_1 \in R(D\s)$. $(x_1)(Dx_1)=-0.5 \leq 0$. Therefore $D$ is not a $P_{\+}$-matrix. So conclusion of the first part of Theorem \ref{t15} does not hold.

	\end{example}
	
	\begin{example}\label{eg5}
Consider $M=\left(\begin{array}{ccc}0.125& 0.125& -0.25\\
		-0.125& -0.125& 0.25\\
		-0.25& -0.25& 0\end{array}\right)$ with $A=\left(\begin{array}{ccc}0.125& \hspace{0.1cm} 0.125\\
		-0.125& \hspace{0.1cm} -0.125
		\end{array}\right)$, $B=\left(\begin{array}{ccc}-0.25\\
		0.25
		\end{array}\right)$, $C=\left(\begin{array}{ccc}-0.25& \hspace{0.1cm} -0.25
		\end{array}\right)$, $D=(0)$.
		Now $gppt(M,A)=H=\left(\begin{array}{ccc}2& -2& 1\\
		2& -2& 1\\
		-1& 1& -0.5\end{array}\right)$ and $M_0=M$. Clearly $R(C) \nsubseteq R(D)$ and 
		$R(B\s) \nsubseteq R(D\s)$. By the previous example, $H$ is a $P_{\+}$-matrix but $M_0$ is not a $P_{\+}$-matrix. So conclusion of the second part of Theorem \ref{t15} does not hold. 	This example also  shows that converse of the first part Theorem \ref{t15} is not true in general.
		
	\end{example}

In \cite{bishtravindranKCS}, for a $R_\dagger$-matrix $M$, some sufficient conditions are given for $gppt(M,A)$ to be a $R_\dagger$-matrix. We now prove generalized versions of them in the following results. 
	
	\begin{theorem}\label{t99}
		Let $M=\begin{pmatrix}A& B\\ C& D\end{pmatrix}$ be a real matrix with $A$ and $D$ are square matrices. 	Let $M_0=\begin{pmatrix}A& AA\+B\\ CA\+A& D\end{pmatrix}$ and 
		$F=D-CA\+ B$. Suppose $R(C) \subseteq R(F)$ and $R(B\s) \subseteq R(F\s)$.
		If $M_0$ is a $R_\dagger$- matrix, then $gppt(M,A)$ is a $R_\dagger$- matrix. On the other hand, suppose $R(C) \subseteq R(D)$ and $R(B\s) \subseteq R(D\s).$ If $gppt(M,A)$ is a $R_{\+}$-matrix, then $M_0$ is a $R_{\+}$-matrix.
	\end{theorem}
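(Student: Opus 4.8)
The plan is to mirror the argument used for the $P_\+$ version in Theorem \ref{t15}, replacing the sign conditions $(z)_i(Hz)_i\le 0$ by the full complementarity data of $LCP(H,0)$. Write $H=gppt(M,A)$ and let $z=\left(\begin{array}{c}z_1\\ z_2\end{array}\right)\in R(H\s)$ be any solution of $LCP(H,0)$, so that $z\ge 0$, $Hz\ge 0$ and $z\s(Hz)=0$. First I would record the structural consequences already extracted in the proof of Theorem \ref{t15}: reading off a witness $H\s v=z$ gives $z_1\in R(A)$, hence $z_1=AA\+z_1$, while the hypothesis $R(B\s)\subseteq R(F\s)$ forces $z_2\in R(F\s)$, hence $z_2=F\+Fz_2$; writing $w=\left(\begin{array}{c}w_1\\ w_2\end{array}\right)=Hz$ gives $w_1=A\+z_1-A\+Bz_2\in R(A\s)$, so $w_1=A\+Aw_1$.

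Next, using $gppt(H,A\+)=M_0$ together with part $(1)$ of Theorem \ref{t11}, I would transfer the identity $Hz=w$ into $M_0\left(\begin{array}{c}w_1\\ z_2\end{array}\right)=\left(\begin{array}{c}z_1\\ w_2\end{array}\right)$, and set $x=\left(\begin{array}{c}w_1\\ z_2\end{array}\right)$ so that $M_0x=\left(\begin{array}{c}z_1\\ w_2\end{array}\right)$. The point of the $R_\+$ version is that the three LCP conditions for $x$ with respect to $M_0$ now follow by a pure reshuffling of blocks: $x\ge 0$ because $w_1$ is the first block of $Hz\ge 0$ and $z_2$ is the second block of $z\ge 0$; $M_0x\ge 0$ because $z_1$ is the first block of $z\ge 0$ and $w_2$ is the second block of $Hz\ge 0$; and, crucially, complementarity is automatic since $x\s(M_0x)=w_1\s z_1+z_2\s w_2=z_1\s w_1+z_2\s w_2=z\s(Hz)=0$. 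To certify $x\in R(M_0\s)$ I would invoke Theorem \ref{t31} for $M_0$, whose blocks satisfy the four range inclusions with the same Schur complement $F$, giving $(M_0)\+M_0=\begin{pmatrix}A\+A& 0\\ 0& F\+F\end{pmatrix}$, whence $(M_0)\+M_0x=\left(\begin{array}{c}A\+Aw_1\\ F\+Fz_2\end{array}\right)=x$. Since $M_0$ is an $R_\+$-matrix, $x=0$, i.e. $w_1=0$ and $z_2=0$; then $M_0x=0$ yields $z_1=0$, so $z=0$ and $H$ is an $R_\+$-matrix.

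For the converse half I would not repeat the computation but instead apply the first half to the pair $(H,A\+)$. Here $gppt(H,A\+)=M_0$, the generalized Schur complement of $H$ with respect to the pivot $A\+$ equals $D$, and a direct check shows that the ``$M_0$-construction'' applied to $H$ at $A\+$ returns $H$ itself. The range hypotheses needed to run the first half in this configuration are $R(CA\+)\subseteq R(D)$ and $R(B\s(A\+)\s)\subseteq R(D\s)$, and these are supplied by $R(C)\subseteq R(D)$ and $R(B\s)\subseteq R(D\s)$ since $R(CA\+)\subseteq R(C)$ and $R(B\s(A\+)\s)\subseteq R(B\s)$. Thus the first half, read with $M\mapsto H$ and $A\mapsto A\+$, states precisely that if $H$ is an $R_\+$-matrix then $gppt(H,A\+)=M_0$ is an $R_\+$-matrix, which is the claim.

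The routine steps are the block identities from Theorems \ref{t11} and \ref{t31} and the membership computations, all inherited essentially verbatim from the proof of Theorem \ref{t15}. The one genuinely new point, and the step I would treat most carefully, is the verification that complementarity is preserved under the block swap $z\mapsto x$: one must confirm that $z\s(Hz)$ and $x\s(M_0x)$ reduce to exactly the same scalar products $z_1\s w_1$ and $z_2\s w_2$, so that $z\s(Hz)=0$ transfers to $x\s(M_0x)=0$ with no sign or coordinate bookkeeping. This is exactly what makes the $R_\+$ argument cleaner than its $P_\+$ counterpart, in which a coordinatewise matching of the products was required.
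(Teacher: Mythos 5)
Your proposal is correct and takes essentially the same route as the paper: the forward half reproduces, step for step, the paper's argument (with your $w$ in the role of the paper's $v$), transferring the LCP data $(z,Hz)$ to $(x,M_0x)$ via $gppt(H,A\+)=M_0$, part (1) of Theorem \ref{t11}, and the identity $(M_0)\+ M_0=\begin{pmatrix}A\+A & 0\\ 0 & F\+F\end{pmatrix}$ from Theorem \ref{t31}. For the converse the paper only remarks that $gppt(M_0,A)=H$ and $gppt(H,A\+)=M_0$; your self-application of the first half to the pair $(H,A\+)$, including the checks that the Schur complement there is $D$, that the $M_0$-construction returns $H$, and that $R(CA\+)\subseteq R(D)$ and $R(B\s (A\+)\s)\subseteq R(D\s)$ follow from the stated hypotheses, is the same idea spelled out in fuller detail.
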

	
	\begin{proof}
		Let $H=gppt(M,A)$. Let $z=\left(\begin{array}{c}z_1\\ z_2\end{array}\right) \in R(H\s)$ such that $z \geq 0$, $Hz= v=\left(\begin{array}{c}v_1\\ v_2\end{array}\right) \geq 0$ and $\langle v,z \rangle =0$. Then $v_1= (A^\dagger) z_1- (A^\dagger B) z_2$ and $v_2=(CA\+ ) z_1 + F z_2$. This gives that $v_1 \in R(A\s)$. So $v_1= A\+ Av_1$.  As $z \in R(H\s)$, there exists $u=\left(\begin{array}{c}u_1\\ u_2\end{array}\right)$ such that $H\s u=z$. Therefore $z_1= (A\+)\s u_1 + (CA\+)\s u_2$ and $z_2=(-A\+ B)\s u_1 +F\s u_2.$ This implies $z_1 \in R(A)$ and so $z_1 = AA\+ z_1$.  Since $R(B\s) \subseteq R(F\s)$, we obtain $z_2 \in R(F\s)$ and so $z_2=F\+ Fz_2$. Therefore $Hz=H\left(\begin{array}{c}AA\+z_1\\ z_2\end{array}\right)= v=\left(\begin{array}{c}A\+ A v_1\\ v_2\end{array}\right)$. Observe that $gppt(H,A\+)=M_0$. Now by $1$ of Theorem \ref{t11} we have $M_0\left(\begin{array}{c}A\+Av_1\\ z_2\end{array}\right) = \left(\begin{array}{c}AA\+z_1\\ v_2\end{array}\right) =\left(\begin{array}{c}z_1\\ v_2\end{array}\right).$ Let $x=\left(\begin{array}{c}v_1\\ z_2\end{array}\right)$ and $y=\left(\begin{array}{c}z_1\\ v_2\end{array}\right)$. We have $x\geq 0$,   $M_0x=y \geq 0$ and  $\langle x,y \rangle = \sum x_i y_i = \sum z_i v_i= \langle z,v \rangle =0$. 
Since $v_1 = A\+ A v_1$ and $z_2=F\+ Fz_2$,  by Theorem \ref{t31}, we have $(M_0)\+ M_0= \begin{pmatrix}A\+A& 0\\ 0& F\+F\end{pmatrix}$. Therefore $(M_0)\+ M_0 x= \left(\begin{array}{c}A\+ Av_1\\ F\+Fz_2\end{array}\right)= \left(\begin{array}{c}v_1\\ z_2\end{array}\right)=x$, hence  $x \in R((M_0)\s)$.
		Since $M_0 $ is a $R_{\+}$-matrix we have $x=0$, so  $v_1=0$ and $z_2=0$.
	As		$M_0 \left(\begin{array}{c}v_1\\ z_2\end{array}\right)= \left(\begin{array}{c}z_1\\ v_2\end{array}\right)$,  $z_1 = 0$ and hence $z=0$. Thus $gppt(M,A)$ is a $R_{\+}$-matrix. 		Now the proof of other part follows  by observing
		$gppt(M_0, A)=gppt(M,A)=H$ and $gppt(H, A\+)=M_0$.

	\end{proof}

	\begin{theorem}\label{end_thm}
		Let $M=\begin{pmatrix}A& B\\ C& D\end{pmatrix}$ be a real matrix with $A$ and $D$ are square matrices. 	Let $M_1=\begin{pmatrix}A& BD\+D\\ DD\+C& D\end{pmatrix}$ and 
		$G=A-BD\+C$. Suppose $R(B) \subseteq R(G)$ and $R(C\s) \subseteq R(G\s)$. 
		If $M_1$ is a $R_\dagger$- matrix, then $gppt(M,D)$ is a  $R_\dagger$-matrix.
				On the other hand, suppose $R(B) \subseteq R(A)$ and $R(C\s) \subseteq R(A\s).$ If $gppt(M,D)$ is a $R_{\+}$-matrix, then $M_1$ is a  $R_{\+}$-matrix.
	\end{theorem}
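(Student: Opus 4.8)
The plan is to mirror the proof of Theorem \ref{t99} under the natural duality that exchanges $A$ with $D$ and the Schur complement $F$ with $G$; concretely I would work with $K=gppt(M,D)=\begin{pmatrix}G & BD\+\\ -D\+C & D\+\end{pmatrix}$ and pivot on $D\+$ instead of $A\+$. The two structural identities that carry the whole argument are $gppt(K,D\+)=M_1$ and $gppt(M_1,D)=K$; both reduce to $(D\+)\+=D$ and $D\+DD\+=D\+$ and can be checked by a short block multiplication. These are the exact analogues of $gppt(H,A\+)=M_0$ and $gppt(M_0,A)=H$ used in Theorem \ref{t99}.

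For the forward implication, suppose $M_1$ is an $R_\+$-matrix and let $z=\begin{pmatrix}z_1\\ z_2\end{pmatrix}\in R(K\s)$ solve $LCP(K,0)$, so $z\geq 0$, $Kz=v=\begin{pmatrix}v_1\\ v_2\end{pmatrix}\geq 0$ and $\langle z,v\rangle=0$. Reading the blocks of $Kz=v$ gives $v_2=D\+(-Cz_1+z_2)\in R(D\s)$, hence $v_2=D\+Dv_2$; writing $z=K\s u$ gives $z_2\in R((D\+)\s)=R(D)$, hence $z_2=DD\+z_2$, while the hypothesis $R(C\s)\subseteq R(G\s)$ forces $z_1\in R(G\s)$, hence $z_1=G\+Gz_1$. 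Since $z_2=DD\+z_2$, I would apply part (2) of Theorem \ref{t11} to $K$ with pivot $D\+$, together with $gppt(K,D\+)=M_1$, to obtain $M_1\begin{pmatrix}z_1\\ v_2\end{pmatrix}=\begin{pmatrix}v_1\\ z_2\end{pmatrix}$. Putting $x=\begin{pmatrix}z_1\\ v_2\end{pmatrix}$ one checks directly that $x\geq 0$, $M_1x=\begin{pmatrix}v_1\\ z_2\end{pmatrix}\geq 0$ and $\langle x,M_1x\rangle=\langle z,v\rangle=0$.

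The remaining point is the membership $x\in R((M_1)\s)$. Here I would invoke the $G$-version of Theorem \ref{t31} (Theorem 3.2 of \cite{bishtravindranKCS}): its four range inclusions hold for $M_1$ automatically, since $R(BD\+D)\subseteq R(B)\subseteq R(G)$ and $R((DD\+C)\s)\subseteq R(C\s)\subseteq R(G\s)$, while the inclusions into $R(D)$ and $R(D\s)$ are built into the blocks $DD\+C$ and $BD\+D$. This yields $(M_1)\+M_1=\begin{pmatrix}G\+G & 0\\ 0 & D\+D\end{pmatrix}$, and combined with $z_1=G\+Gz_1$, $v_2=D\+Dv_2$ it gives $(M_1)\+M_1x=x$, i.e. $x\in R((M_1)\s)$. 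As $M_1$ is an $R_\+$-matrix, $x=0$, so $z_1=0$ and $v_2=0$; substituting into $M_1x=\begin{pmatrix}v_1\\ z_2\end{pmatrix}$ forces $v_1=0$ and $z_2=0$, whence $z=0$ and $gppt(M,D)$ is an $R_\+$-matrix. For the converse I would run the forward implication with $K$ in the role of $M$ and $D\+$ in the role of the pivot: the constructed matrix is $K$ itself, its Schur complement equals $A$, and the hypotheses $R(B)\subseteq R(A)$, $R(C\s)\subseteq R(A\s)$ deliver precisely $R(BD\+)\subseteq R(A)$ and $R((D\+C)\s)\subseteq R(A\s)$, so that $gppt(K,D\+)=M_1$ inherits the $R_\+$ property from $K=gppt(M,D)$.

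I expect the only real difficulty to be bookkeeping rather than ideas: one must keep the dual dictionary ($A\+\leftrightarrow D\+$, $F\leftrightarrow G$, and the swapped roles of the two diagonal blocks) consistent so that Theorem \ref{t11}(2) and the $M_1$-analogue of Theorem \ref{t31} are applied with exactly the right blocks. A stray sign or transpose in the entries of $K$ or in $gppt(K,D\+)$ would quietly destroy the block-diagonal identity $(M_1)\+M_1=\mathrm{diag}(G\+G,D\+D)$, which is the linchpin for $x\in R((M_1)\s)$; so verifying that identity, equivalently the four range conditions of the $G$-version of Theorem \ref{t31} for $M_1$, is the step I would check most carefully.
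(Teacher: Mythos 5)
Your proof is correct and is precisely the argument the paper intends: the paper's own proof of this theorem is the single line ``The proof is similar to Theorem \ref{t99}'', and your dual dictionary ($A^\dagger\leftrightarrow D^\dagger$, $F\leftrightarrow G$, part (2) of Theorem \ref{t11} in place of part (1), and Theorem 3.2 of \cite{bishtravindranKCS} in place of Theorem \ref{t31}) carries that analogy out faithfully. In particular, the structural identities you flag as the crux --- $gppt(K,D^\dagger)=M_1$, $gppt(M_1,D)=K$, and $(M_1)^\dagger M_1=\begin{pmatrix}G^\dagger G & 0\\ 0 & D^\dagger D\end{pmatrix}$ --- all check out under the stated range hypotheses, and your converse via applying the forward implication to $K$ with pivot $D^\dagger$ mirrors exactly the closing step of the paper's proof of Theorem \ref{t99}.
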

	
	\begin{proof}
The proof is similar to  Theorem \ref{t99}.
	\end{proof}
	
	\begin{center}
		\textbf{Acknowledgements}
	\end{center}
	
	The first author wishes to thank TNSCST, Government of Tamilnadu, India for the financial support through Young Scientist Fellowship and to carry out this work under collaborative Research Scheme.   The third author thanks the National
	Institute of Technology Karnataka (NITK), Surathkal for giving financial support

\end{document}